\def\cite{\citet}
\def\cf{{\mathcal F}}
\def\ci{{\mathcal I}}
\def\cl{{\mathcal L}}
\def\cn{{\mathcal N}}
\def\cO{{\mathcal O}}
\def\E{{\mathbb E}}
\def\L{{\mathbb L}}
\def\P{{\mathbb P}}
\def\R{{\mathbb R}}
\def\ind#1{{\mathbf 1}_{\left\{#1\right\}}}
\def\s{\star}
\def\underbar{\underline}
\def\bar{\overline}
\def\abs#1{\mathop{\left| #1 \right|}\nolimits}
\def\norm#1{\mathop{\left\| #1 \right\|}\nolimits}
\def\inv#1{\mathop{\frac{1}{ #1}}\nolimits}
\def\expp#1{\mathop {\mathrm{e}^{ #1}}}
\def\interior#1{\mathop {\mathrm{int}(#1)}}
\theoremstyle{plain}
\newtheorem{thm}{Theorem}[section]
\newtheorem{prop}[thm]{Proposition}
\newtheorem{lemma}[thm]{Lemma}
\newcounter{hypo}
\newcommand*{\dohypo}{\textbf{(A\thehypo)}}
\newenvironment{hypo}{%
  
  \refstepcounter{hypo}
  \list{}{%
    \settowidth{\labelwidth}{\dohypo}%
    \setlength{\labelsep}{10pt}%
    \setlength{\leftmargin}{\labelwidth}
    \advance\leftmargin\labelsep%
  }%
\item[\dohypo]%
}{%
  \endlist
}
\def\hypref#1{\hyperref[hyp:#1]{(\textbf{A\ref*{hyp:#1}})}}
\def\hypreff#1#2{\hyperref[hyp:#2]{(\textbf{A\ref*{hyp:#1}-\ref*{hyp:#2}})}}
\theoremstyle{definition}
\newtheorem{rem}{Remark}
\author{J\'er\^ome Lelong\footnote{Laboratoire Jean Kuntzmann, Universit\'e de
    Grenoble et CNRS, BP 53, 38041 Grenoble C\'edex 9, FRANCE, 
    e-mail : jerome.lelong@imag.fr}}
\date{\today}
\begin{document}

\title{Asymptotic normality of randomly truncated stochastic algorithms}

\maketitle

\begin{abstract} We study the convergence rate
  of randomly truncated stochastic algorithms, which consist in the
  truncation of the standard Robbins-Monro procedure on an increasing sequence
  of compact sets. Such a truncation is often required in practice to ensure
  convergence when standard algorithms fail because the expected-value function
  grows too fast. In this work, we give a self contained proof of a
  central limit theorem for this algorithm under local assumptions on the
  expected-value function, which are
  fairly easy to check in practice.
  \\
  {\bf Key words:} stochastic approximation, central limit theorem, randomly
  truncated stochastic algorithms, martingale arrays.
\end{abstract}

\section{Introduction}

The use of stochastic algorithms is widespread for solving  stochastic
optimization problems. These algorithms are extremely valuable for a practical
use and particularly well suited to localize the zero of a function $u$.
Such algorithms go back to the pioneering work of \cite{MR0042668}, who
considered the sequence
\begin{equation}
  \label{eq:rm}
  X_{n+1} = X_n -\gamma_{n+1} u(X_n) - \gamma_{n+1} \delta M_{n+1}
\end{equation}
to estimate the zero of the function $u$. The sequence $(\gamma_n)_n$
classically denotes the gain or step sequence of the algorithm and $(\delta
M_n)_n$ depicts a random measurement error. Nevertheless, the assumptions
required to ensure the convergence \textemdash\ basically, a sub-linear growth
of $u$ on average \textemdash\ are barely satisfied in practice, which
dramatically reduces the range of applications.
\cite{chen86:_stoch_approx_proced} proposed a modified algorithm to deal with
fast growing functions. Their new algorithm can be summed up as
\begin{equation}
  \label{eq:chen-intro}
  X_{n+1} = X_n -\gamma_{n+1} u(X_n) - \gamma_{n+1} \delta M_{n+1} +
  \gamma_{n+1} p_{n+1}
\end{equation}
where $(p_n)_n$ is a truncation term ensuring that the sequence $(X_n)_n$ cannot
jump too far ahead in one step.

In this paper, we are concerned with the rate of convergence of
Equation~\eqref{eq:chen-intro}. Numerous results are known for the sequence
defined by Equation~\eqref{eq:rm}, which is known to converge at the rate
$\sqrt{\gamma_n}$ when $\gamma_n$ is of the form $\frac{\gamma}{n^\alpha}$
with $1/2<\alpha \le 1$ (see \cite{delyon96:_gener_resul_conver_stoch_algor},
\cite{duflo97:_random_iterat_model} or \cite{MR1882723} for instance).  When
$\gamma_n = \frac{\gamma}{n}$ and the Hessian matrix at the optimum is of the
form $\lambda I$, \cite{duflo97:_random_iterat_model} showed that the
convergence rate depends on the relative position of $\lambda$ and
$\frac{\gamma}{2}$.  A functional central limit theorem for this algorithm was
proved by \cite{bouton85:_approx_gauss_markov} and \cite{benveniste87:_algor}.
The convergence rate of constrained algorithms was studied by
\cite{harold03:_stoch_appor_recur_algor_applic}. The problem of multiple
targets was tackled by \cite{pelletier98:_weak} who proved a Central Limit
Theorem.  However, very few results are known about the convergence rate of
the algorithm devised by \cite{chen86:_stoch_approx_proced}. \cite{MR1942427}
briefly studied the convergence rate under global hypotheses on the noise
sequence $(\delta M_n)_n$.  Here, we aim at giving a clarified, self-contained
and elementary proof of this result under local assumptions (see
Section~\ref{sec:compare-chen} for a detailed comparison of the two results).
Besides giving a clarified and self-contained proof of the central limit
theorem for randomly truncated algorithm, the improvement brought by our work
is the use of the local condition $\sup_n \E[ \abs{\delta
  M_n}^{2+\rho}\ind{|X_{n-1} - x^\s| \le \eta}] < \infty$ with some $\rho>0$
and $\eta>0$ replacing the global condition $\sup_n \E[ \abs{\delta
  M_n}^{2+\rho}] < \infty$. 

First, we define the general framework and explain the algorithm developed by
\cite{chen86:_stoch_approx_proced}. Our main results are stated in
Theorems~\ref{thm:tcl-chen} and~\ref{thm:tcl2-chen} (see
page~\pageref{thm:tcl-chen}) depending on the decreasing speed of the
sequence $(\gamma_n)_n$. In Section~\ref{sec:compare-chen}, we discuss the
improvements brought by our new results and we give a concrete example to show
the benefits of using local assumptions. Section~\ref{sec:proofs-tcl} is
devoted to the proof of the main results.

\section{A CLT for randomly truncated stochastic algorithms}

It is quite common to look for the root of a continuous function $u \colon x
\in \R^d \longmapsto u(x) \in \R^d$, which is not easily tractable. We assume
that we can only access $u$ up to a measurement error embodied in the
following by the sequence $(\delta M_n)_n$ and that the norm $\abs{u(x)}^2$
grows faster than $\abs{x}^2$ such that the standard Robbins-Monro algorithm
(see Equation~\eqref{eq:rm}) quickly fails. Instead, we consider the
alternative procedure introduced by \cite{chen86:_stoch_approx_proced}. This
technique consists in forcing the algorithm to remain in an increasing
sequence of compact sets $(K_j)_j$ such that
\begin{equation*}
  \bigcup_{j=0}^{\infty} K_j \: = \: \R^d \quad \mbox{and} \quad \forall j,
  \; K_j \varsubsetneq \interior{K_{j+1}}.
\end{equation*}
It prevents the algorithm from blowing up during the first iterates.  Let
$(\gamma_n)_n$ be a decreasing sequence of positive real numbers satisfying
$\sum_n \gamma_n = \infty$ and $\sum_n \gamma_n^2 < \infty$.  For $X_0
\in \R^d$ and $\sigma_0 = 0$, we define the sequences of random variables
$(X_n)_n$ and $(\sigma_n)_n$ by
\begin{equation}
  \label{eq:chen}
  \begin{cases}
    & X_{n + \frac{1}{2}}  = X_{n} - \gamma_{n+1} u(X_n) - \gamma_{n+1} \delta M_{n+1},\\
    \text{if $X_{n + \frac{1}{2}} \in K_{\sigma_n}$} & X_{n+1} =
    X_{n + \frac{1}{2}} \quad \mbox{ and } \quad \sigma_{n+1} = \sigma_n, \\
    \text{if $X_{n + \frac{1}{2}} \notin K_{\sigma_n}$} & X_{n+1}
    = X_{0}  \quad \mbox{ and } \quad \sigma_{n+1} = \sigma_n + 1. 
  \end{cases}
\end{equation}
Let $\cf_n$ denote the $\sigma-$algebra generated by $(\delta M_k, k \le n)$,
$\cf_n=\sigma(\delta M_k, k \le n)$. We assume that $(\delta M_n)_n$ is a
sequence of martingale increments, i.e. $\E(\delta M_{n+1} | \cf_n) = 0$.

\begin{rem} $X_{n+\inv{2}}$ is actually drawn from the dynamics of the
  Robbins-Monro algorithm (see Equation~\eqref{eq:rm}). If the standard
  algorithm wants to jump too far ahead it is reset to a fixed value. When
  $X_{n + \frac{1}{2}} \notin K_{\sigma_n}$, one can set $X_{n+1}$ to any
  measurable function of $(X_0, \dots, X_n)$ with values in a given compact
  set. The existence of such a compact set is crucial to prove the
  a.s. convergence of ${(X_n)}_n$.
\end{rem}

\noindent It is more convenient to rewrite Equation~\eqref{eq:chen} as
follows
\begin{equation}
  \label{eq:chen2}
  X_{n+1} = X_{n}  - \gamma_{n+1} u(X_n) -
  \gamma_{n+1}\delta M_{n+1} + \gamma_{n+1} p_{n+1}  
\end{equation}
where
\begin{equation*}
  p_{n+1} =  \left(u(X_n) + \delta M_{n+1} + \frac{1}{\gamma_{n+1}} (X_0 - X_n) \right)
  \ind{X_{n+\frac{1}{2}} \notin K_{\sigma_n}}.
\end{equation*}
In this paper, we only consider gain sequences of the type $\gamma_n =
\frac{\gamma}{(n+1)^\alpha}$, with $1/2 < \alpha \leq 1$.  If $\alpha = 1$, we
obtain a slightly different limit. For values of $\alpha$ outside this range,
the almost sure convergence is not even guarantied.

\subsection{Hypotheses}

In the following, the prime notation stands for the transpose operator. We
introduce the following hypotheses. 
\begin{hypo}
  \label{hyp:regularity}
  \begin{enumerate}[label={\it \roman{*}.}, ref=\roman{*}]
  \item \label{hyp:convexe}  
    $\exists x^\s \in \R^d$ s.t. $u(x^\s) = 0$ and
    $\forall x \in \R^d,\: x \neq x^{\star}, \; (x-x^{\star}) \cdot u(x) >0$. 

  \item  \label{hyp:C-1} There exist a function $y: \R^d \rightarrow \R^{d \times d}$
    satisfying $\lim_{\abs{x} \rightarrow 0} \abs{y(x)} = 0$ and a symmetric
    positive definite matrix $A$ such that
    \begin{equation*}
    u(x) = A (x - x^{\star}) + y(x - x^{\star}) (x - x^{\star}).
  \end{equation*}
\end{enumerate}
\end{hypo}

\begin{hypo}
  \label{hyp:cv-ps}
  For any $q >0$, the series $\sum_{n} \gamma_{n+1} \delta M_{n+1} \ind{|X_n -
    x^{\s}| \le q}$ converges almost surely.
\end{hypo}

\begin{hypo}
  \label{hyp:chen-mart-cv}
  \begin{enumerate}[label={\it \roman{*}.}, ref=\roman{*}]
  \item \label{hyp:ui}
    There exist two real numbers $\rho>0$ and $\eta>0$ such that
    \begin{equation*}
      \kappa = \sup_n \E\left( \abs{\delta M_n}^{2+\rho}
        \ind{\abs{X_{n-1}-x^{\star}} \leq \eta}\right) < \infty.
    \end{equation*}

  \item \label{hyp:cv_bracket}
    There exists a symmetric positive definite matrix $\Sigma$ such that
    \begin{equation*} \E\left(\delta M_n \delta M_n^\prime \big| \cf_{n-1} \right)
      \ind{\abs{X_{n-1}-x^{\star}} \leq \eta}\xrightarrow[n \rightarrow
      \infty]{\P} \Sigma.
    \end{equation*}
    
  \end{enumerate}
\end{hypo}

\begin{hypo}
  \label{hyp:chen-interior} There exists $\mu>0$ such that $\forall n \geq 0,  \;
  d(x^{\star}, \partial K_n) \ge  \mu $.
\end{hypo}

\begin{rem}\label{rem_hypo}
  {\it Comments on the assumptions.}
  \begin{enumerate}
  \item Hypothesis \hypreff{regularity}{convexe} is satisfied as soon as $u$ can
    be interpreted as the gradient of a strictly convex function. The Hypothesis
    \hypreff{regularity}{C-1} is equivalent to saying that $u$ is differentiable
    at~$x^{\star}$.
  \item Hypothesis \hypref{cv-ps} ensures that $X_n \longrightarrow x^\s$
    a.s. and $\sigma_n$ is almost surely finite, see
    \cite{lelongar:_almos_sure_conver_of_random} for a proof of this result.
  \item Hypothesis \hypreff{chen-mart-cv}{ui} corresponds to some local uniform
    integrability condition and  reminds of Lindeberg's
    condition. \hypreff{chen-mart-cv}{cv_bracket} guaranties the convergence of
    the angle bracket of the martingale of interest. 
  \item Hypothesis \hypref{chen-interior} is only required for technical
    reasons but one does not need to be concerned with it in practical
    applications. It reminds of the case of constrained stochastic algorithms
    for which the CLT can only be proved for non saturated constraints.
  \end{enumerate}
\end{rem}

\subsection{Main results}
\label{sec:chen-tcls}

For $n \geq 0$, we define the renormalized and centered error
\begin{equation*}
  \Delta_n = \frac{X_n - x^{\star}}{\sqrt{\gamma_n}}.
\end{equation*}

\paragraph{A CLT for $1/2 < \alpha < 1$}

\begin{thm}
  \label{thm:tcl-chen} If we assume Hypotheses \hypref{regularity} to
  \hypref{chen-interior}, the sequence ${(\Delta_{n})}_{n}$ converges in
  distribution to a normal random variable with mean $0$ and covariance
  \begin{equation*}
    V =  \int_0^{\infty} \exp{(-A t)} \Sigma \exp{(- A t)} dt. 
  \end{equation*}
\end{thm}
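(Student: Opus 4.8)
The plan is to exploit the almost sure convergence $X_n \to x^\star$ (guaranteed by Hypothesis \hypref{cv-ps}, see Remark~\ref{rem_hypo}), together with the almost sure finiteness of $\sigma_n$, to reduce the analysis of the randomly truncated algorithm to that of an (essentially) untruncated Robbins--Monro recursion localized near $x^\star$. Since $\sigma_n$ stabilizes at some random but a.s.\ finite value $\sigma_\infty$ and $x^\star \in \interior{K_{\sigma_\infty}}$ by \hypref{chen-interior}, for $n$ large enough no truncation occurs, i.e.\ $p_{n+1}=0$ and $X_{n+1}=X_{n+\frac12}$. The delicate point is that ``$n$ large enough'' is random, so one cannot simply discard the $p_{n+1}$ term; instead I would work with the localized recursion and control everything on the events $\{|X_{n-1}-x^\star|\le\eta\}$, which carry full probability in the limit.

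First I would write the recursion for $\Delta_n=(X_n-x^\star)/\sqrt{\gamma_n}$. Subtracting $x^\star$ in \eqref{eq:chen2} and using \hypreff{regularity}{C-1} to write $u(X_n)=A(X_n-x^\star)+y(X_n-x^\star)(X_n-x^\star)$ gives
\begin{equation*}
  \Delta_{n+1} = \sqrt{\tfrac{\gamma_n}{\gamma_{n+1}}}\,\bigl(I-\gamma_{n+1}A-\gamma_{n+1}y(X_n-x^\star)\bigr)\Delta_n
  - \sqrt{\gamma_{n+1}}\,\delta M_{n+1} + \sqrt{\gamma_{n+1}}\,p_{n+1}.
\end{equation*}
Because $\gamma_n=\gamma/(n+1)^\alpha$ with $1/2<\alpha<1$, a Taylor expansion yields $\sqrt{\gamma_n/\gamma_{n+1}}=1+O(1/n)=1+o(\gamma_{n+1})$ (this is where $\alpha<1$ matters: the drift term $\gamma_{n+1}A$ dominates the $O(\gamma_{n+1}/n)$ correction), so the effective linear coefficient is $I-\gamma_{n+1}A+o(\gamma_{n+1})$, with $A$ positive definite. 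The term $y(X_n-x^\star)\to 0$ a.s., so it too is absorbed into the $o(\gamma_{n+1})$ perturbation of the drift. The truncation term contributes $\sqrt{\gamma_{n+1}}\,p_{n+1}$, which vanishes for $n$ past the (random) stabilization time of $\sigma_n$; I would make this rigorous by restricting to the event $\{\sigma_\infty\le N\}$, letting $N\to\infty$ at the end, so that on this event $p_{n+1}=0$ for all large $n$.

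Next I would identify the martingale driving the limit. Setting $\varepsilon_{n+1}=-\sqrt{\gamma_{n+1}}\,\delta M_{n+1}\ind{|X_n-x^\star|\le\eta}$, the partial sums of the appropriately reweighted $\varepsilon$'s form a martingale array; I would apply a martingale central limit theorem (the Lindeberg-type array CLT, e.g.\ as in Hall--Heyde). Its two hypotheses are exactly supplied by \hypref{chen-mart-cv}: condition \hypreff{chen-mart-cv}{cv_bracket} gives convergence of the conditional covariances (the angle bracket) to $\Sigma$, and condition \hypreff{chen-mart-cv}{ui}, via the $2+\rho$ moment bound and $\gamma_{n+1}\to 0$, yields the conditional Lindeberg condition. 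The standard ``variation of constants'' representation then expresses $\Delta_n$ as a weighted sum $\sum_{k\le n}\Phi_{n,k}\,\varepsilon_k$ plus asymptotically negligible terms, where $\Phi_{n,k}=\prod_{j=k+1}^{n}(I-\gamma_j A+o(\gamma_j))$ behaves like $\exp(-A\sum_{j=k+1}^n\gamma_j)$; one shows $\Phi_{n,k}$ concentrates near $\exp(-A(t_n-t_k))$ with $t_n=\sum_{j\le n}\gamma_j$, and the Riemann-sum structure of $\sum_k \gamma_k \exp(-A(t_n-t_k))\Sigma\exp(-A(t_n-t_k))$ converges to $V=\int_0^\infty e^{-At}\Sigma e^{-At}\,dt$.

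The main obstacle I anticipate is the interplay between the \emph{random} truncation and the \emph{local} nature of the hypotheses: all moment and bracket controls are only assumed on $\{|X_{n-1}-x^\star|\le\eta\}$, so one must carefully argue that the indicator $\ind{|X_{n-1}-x^\star|\le\eta}$ can be inserted and removed ``for free'' in the limit, using that $\P(|X_{n-1}-x^\star|>\eta)\to 0$. Concretely, one decomposes $\Delta_n$ into its localized version $\widetilde\Delta_n$ (built from the localized increments and with $p\equiv 0$) and a remainder supported on $\bigcup_{k\ge n}\{|X_{k-1}-x^\star|>\eta\}\cup\{\sigma_\infty>N\}$, whose probability can be made arbitrarily small; proving $\widetilde\Delta_n \Rightarrow \mathcal N(0,V)$ and then transferring the result to $\Delta_n$ by a standard $\varepsilon$--$N$ argument is the technical heart of the proof. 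A secondary subtlety is uniform control of the products $\Phi_{n,k}$ when the $o(\gamma_j)$ perturbations are only known to be small eventually (and only a.s., not deterministically), which again is handled on the events of high probability where stabilization and localization have kicked in.
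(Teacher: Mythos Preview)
Your roadmap---linearize via \hypreff{regularity}{C-1}, write a variation-of-constants formula, apply a martingale array CLT, and localize to handle the truncation---is exactly the paper's. But there is a genuine gap: you never establish that $(\Delta_n)_n$ is tight (or, what the paper actually proves, that $\sup_n \E\bigl(|\Delta_n|^2 \ind{A_n}\bigr)<\infty$ on high-probability sets $A_n$). Without this, the step ``$y(X_n-x^\star)\to 0$ a.s., so it too is absorbed into the $o(\gamma_{n+1})$ perturbation'' does not go through: the remainder carries the factor $y(X_n-x^\star)\Delta_n$, and the a.s.\ convergence $X_n\to x^\star$ only gives $|\Delta_n|=o(1)/\sqrt{\gamma_n}$, which may well blow up. The same issue kills your control of the random $o(\gamma_j)$ perturbations inside the product $\Phi_{n,k}$. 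The paper devotes an entire lemma (Lemma~\ref{lem:tight}) to this $L^2$ bound; its proof is a recursive argument on $\E(|\Delta_n|^2\ind{A_n})$ in which the truncation term is handled by bounding $\P(X_{n+1/2}\notin K_{\sigma_n},\,A_n)\le C\gamma_{n+1}^2$ via Markov's inequality---this is precisely where Hypothesis~\hypref{chen-interior} and the local moment bound \hypreff{chen-mart-cv}{ui} do real work, not merely the a.s.\ finiteness of $\sigma_\infty$ that you invoke.

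A secondary difference worth noting: the paper does not work with the matrix products $\Phi_{n,k}=\prod(I-\gamma_j A+o(\gamma_j))$ at all. Instead it multiplies the recursion by the \emph{exact} exponential $\expp{s_{n,k}Q}$ and pushes every perturbation (the $y$-term, the $p$-term, and the step-size corrections) into a single scalar-weighted remainder $\sum_k \expp{(s_{n,k}-s_{n,t})Q}\gamma_{n+k}R_{n+k}$, then shows $R_m\to 0$ in probability (using the $L^2$ tightness just mentioned to get uniform integrability of $y(X_m-x^\star)\Delta_m\ind{|X_m-x^\star|\le\eta}$) and applies a deterministic Ces\`aro-type lemma. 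This cleanly sidesteps the ``random, only-eventually-small perturbations inside a matrix product'' issue you flag as a secondary subtlety.
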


\paragraph{A CLT for  $\alpha = 1$}

\begin{thm}
  \label{thm:tcl2-chen} We assume Hypotheses \hypref{regularity} to
  \hypref{chen-interior} and
  \begin{hypo}
    \label{hyp:A-gamma-definite}
    $\gamma A - \frac{1}{2} I$ is positive definite.
  \end{hypo}
  Then, the sequence ${(\Delta_{n})}_{n}$ converges in distribution to a
  normal random variable with mean $0$ and covariance
  \begin{equation*}
    V =  \gamma \int_0^{\infty} \exp{\left(\left(\frac{I}{2}-\gamma
          A\right)t\right)} \Sigma \exp{\left(\left(\frac{I}{2}- \gamma
          A\right)t\right)} dt. 
  \end{equation*}
\end{thm}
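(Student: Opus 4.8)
The plan is to localize the recursion, linearize it around $x^{\star}$, and apply a central limit theorem for martingale arrays. By Hypothesis~\hypref{cv-ps} (see Remark~\ref{rem_hypo} and \cite{lelongar:_almos_sure_conver_of_random}), $X_n\to x^{\star}$ a.s.\ and $\sigma_n$ is a.s.\ eventually constant, so there is an a.s.\ finite random index $N$ such that, for every $n\ge N$, the truncation is inactive ($p_{n+1}=0$) and $\abs{X_n-x^{\star}}\le\eta$. Since convergence in distribution is unaffected by the value of $N$, one may argue on the events $\{N\le m\}$, whose probability tends to $1$, and so reduce to an untruncated recursion started from a controlled level. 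For $n\ge N$, writing $r_n:=y(X_n-x^{\star})\to 0$ a.s.\ and using \hypreff{regularity}{C-1} together with $\gamma_n/\gamma_{n+1}=1+\gamma_{n+1}/\gamma+O(\gamma_{n+1}^2)$ (valid because $\gamma_n=\gamma/(n+1)$), a direct computation turns \eqref{eq:chen2} into
\begin{equation*}
  \Delta_{n+1}=(I-\gamma_{n+1}B+R_{n+1})\Delta_n-\sqrt{\gamma_{n+1}}\,\delta M_{n+1},\qquad B:=A-\tfrac{1}{2\gamma}I,
\end{equation*}
with $\norm{R_{n+1}}=\gamma_{n+1}\varepsilon_n$ for some $\varepsilon_n\to 0$ a.s. The matrix $\gamma B=\gamma A-\tfrac12 I$ is positive definite, which is exactly Hypothesis~\hypref{A-gamma-definite}; the extra shift by $\tfrac1{2\gamma}I$ — absent when $\alpha<1$, since then $\gamma_n/\gamma_{n+1}=1+o(\gamma_{n+1})$ — is what produces the modified covariance of Theorem~\ref{thm:tcl2-chen} compared with Theorem~\ref{thm:tcl-chen}.

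Next I would establish an a priori bound on $(\Delta_n)$: using the displayed recursion, the conditional–covariance control coming from \hypreff{chen-mart-cv}{cv_bracket}, and $\gamma B>0$, one gets $\limsup_n\E(\abs{\Delta_n}^2\ind{\abs{X_n-x^{\star}}\le\eta})<\infty$, hence tightness of the relevant subsequence. Since $\norm{R_{n+1}\Delta_n}=\gamma_{n+1}\varepsilon_n\abs{\Delta_n}=\sqrt{\gamma_{n+1}}\,o_{\P}(1)$, the perturbation $R_{n+1}\Delta_n$ is of smaller order than the noise term, so, unrolling with the \emph{deterministic} transition matrices $\bar\Pi_{n,k}:=\prod_{j=k+1}^n(I-\gamma_jB)$ (and $\bar\Pi_{k,k}=I$), one obtains
\begin{equation*}
  \Delta_n=\bar\Pi_{n,N}\Delta_N-\sum_{k=N}^{n-1}\bar\Pi_{n,k+1}\sqrt{\gamma_{k+1}}\,\delta M_{k+1}+\rho_n,\qquad \rho_n\xrightarrow{\P}0,
\end{equation*}
where $\rho_n\to 0$ in probability follows from the tightness of $(\Delta_k)$ and the estimates $\norm{\bar\Pi_{n,k}}\le C(k/n)^{\lambda}$ with $\lambda=\lambda_{\min}(\gamma B)>0$ (which also give $\bar\Pi_{n,N}\Delta_N\to0$ a.s.) and $\sum_k\gamma_{k+1}\norm{\bar\Pi_{n,k+1}}=O(1)$.

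It remains to prove $T_n:=\sum_{k=N}^{n-1}\bar\Pi_{n,k+1}\sqrt{\gamma_{k+1}}\,\delta M_{k+1}\xrightarrow{d}\mathcal N(0,V)$. The summands form a triangular array of $(\cf_k)$–martingale increments with \emph{deterministic} weights, so I would invoke the Lindeberg martingale CLT. The bracket converges: using $\abs{X_k-x^{\star}}\le\eta$ for $k\ge N$ and \hypreff{chen-mart-cv}{cv_bracket}, $\E(\delta M_{k+1}\delta M_{k+1}^\prime\mid\cf_k)\to\Sigma$, and a Toeplitz/Riemann–sum computation based on $\bar\Pi_{n,k}\sim(k/n)^{\gamma B}$ (a genuine matrix power, $\gamma B$ being symmetric positive definite) and $\gamma_{k+1}\sim\gamma/k$ gives
\begin{equation*}
  \sum_{k=N}^{n-1}\gamma_{k+1}\,\bar\Pi_{n,k+1}\,\E(\delta M_{k+1}\delta M_{k+1}^\prime\mid\cf_k)\,\bar\Pi_{n,k+1}^\prime\longrightarrow\gamma\int_0^1\frac{x^{\gamma B}\,\Sigma\,x^{\gamma B}}{x}\,dx=V,
\end{equation*}
the last equality after the change of variable $x=e^{-t}$ and $\gamma B=\gamma A-\tfrac12 I$; the integral converges exactly because $\gamma B>0$. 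The conditional Lindeberg condition follows from the local moment bound \hypreff{chen-mart-cv}{ui}: for $k\ge N$ one has $\ind{\abs{X_k-x^{\star}}\le\eta}=1$, so $\E(\abs{\delta M_{k+1}}^{2+\rho}\mid\cf_k)$ is controlled through $\kappa$, and $\sum_k\gamma_{k+1}^{1+\rho/2}\norm{\bar\Pi_{n,k+1}}^{2+\rho}\to 0$ as $n\to\infty$. The martingale CLT then yields $T_n\xrightarrow{d}\mathcal N(0,V)$, and Slutsky's lemma combined with the previous step gives the theorem.

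The main obstacle is the transition–matrix analysis: one must control the random perturbations $R_j$ well enough to pass from the recursion with the matrices $I-\gamma_jB+R_j$ to the clean representation using the deterministic $\bar\Pi_{n,k}$ — this is where the a priori tightness of $(\Delta_n)$ is indispensable — and one must establish the sharp asymptotic $\bar\Pi_{n,k}\sim(k/n)^{\gamma B}$ together with the operator–norm bounds needed for the Lindeberg condition and for the vanishing of $\rho_n$ and of $\bar\Pi_{n,N}\Delta_N$. Making the reduction to a deterministic starting level rigorous (the conditioning on the random index $N$) is routine but must be done carefully. Everything else — the Riemann–sum identification of $V$ and the two martingale–CLT hypotheses — is standard once these estimates are in place.
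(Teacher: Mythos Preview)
Your strategy mirrors the paper's: linearize around $x^\star$, establish an $L^2$ a priori bound for $(\Delta_n)$ on a localized event, unroll the recursion with transition operators (your $\bar\Pi_{n,k}=\prod_{j>k}(I-\gamma_jB)$ is the product form of the paper's $e^{Q(s_{n,k}-s_{n,t})}$), show the remainder vanishes in probability via that tightness, and apply a martingale--array CLT to the noise sum. The covariance identifications agree after the substitution $x=e^{-t}$.

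The one place where your sketch is genuinely looser is the localization. You propose to work on $\{N\le m\}$, but this event is \emph{not} $\cf_n$--measurable for $n\ge m$ (it asserts that no truncation ever occurs after time $m$), so you cannot pull its indicator through the conditional expectations $\E_n(\cdot)$ that drive the $L^2$ recursion for the a priori bound. The paper instead works on the adapted, decreasing sets $A_n=\{\sup_{N_0\le k\le n}|X_k-x^\star|\le\eta\}$; but then the truncation term $p_{n+1}$ is not automatically zero on $A_n$, and one must bound $\P(A_n\cap\{X_{n+1/2}\notin K_{\sigma_n}\})\le C\gamma_{n+1}^2$ via Markov's inequality and the uniform distance $d(x^\star,\partial K_{\sigma_n})\ge\mu$. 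This is exactly where Hypothesis~\hypref{chen-interior} enters, and it is the content of Lemma~\ref{lem:tight}. You can either reproduce that estimate, or replace $\{N\le m\}$ by the adapted events $A_n\cap\{\sigma_n=\sigma_m\}$ (which are $\cf_n$--measurable and do kill $p_{k+1}$ for $m\le k<n$); either way, the step you flag as ``routine but must be done carefully'' is in fact the main technical lemma of the paper's proof, not a formality.
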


\begin{rem} Hypothesis \hypref{A-gamma-definite} involves the gradient of
  function $u$ at the point $x^{\star}$, which is seldom tractable from a
  practical point of view but one can definitely not avoid it. The positivity of
  $\gamma A - \frac{1}{2} I$ is the border of two different convergence regimes
  as already noted by \cite{duflo97:_random_iterat_model} for the Robbins-Monro
  algorithm.
\end{rem}

\subsection{Discussion around the assumptions of Theorem~\ref{thm:tcl2-chen}}
\label{sec:compare-chen}

Theorem~\ref{thm:tcl2-chen} is actually an extension of \cite[Theorem
3.3.1]{MR1942427}. The main improvements brought by our new result concern the
conditions imposed on the noise term. Our Assumption~\hypref{chen-mart-cv}  is
weaker than the one imposed by \cite[A3.3.3]{MR1942427} since we only assume
local conditions on the noise terms; namely, unlike Chen, we only need to
monitor the behavior of $(\delta M_n)_n$ in a small neighborhood of the optimum
$x^\s$ (see Assumptions~\hypreff{chen-mart-cv}{ui} and
\hypreff{chen-mart-cv}{cv_bracket}). Moreover, we only assume the local
convergence in probability of the angle bracket of the martingale of interest
built with $(\delta M_n)_n$ whereas \cite[Equation (3.3.22)]{MR1942427} requires
the almost sure convergence which may be a little harder to prove in practical
applications.

Concerning Assumption~\hypreff{regularity}{C-1}, it essentially means that $u$
must be differentiable at $x^\s$. This is to be compared to the Hölder
continuity property of the  remainder of the first order expansion of $u$ at
$x^\s$ required by \cite[A3.3.4]{MR1942427}, which is not so obvious to check in
practice. Our goal in this work was not only to state a theorem with weaker
assumptions but also to present a self contained and elementary proof of a
central limit theorem for truncated stochastic algorithms. In particular,
Lemma~\ref{lem:tight} provides a smart way of handling the truncation terms.

Let us us consider an example which often arises in practice (see for instance
\cite{arouna03:_robbin_monro} and \cite{lelong07:_etude_asymp_des_algor_stoch}).
Assume the function $u$ is defined as an expectation $u(x) = \E(U(x, Z))$ where
$Z$ is a random vector, then we can for instance take $\delta M_{n+1} = U(X_n,
Z_{n+1}) - u(X_n)$ with $(Z_n)_n$ an i.i.d. sequence of random vectors following
the law of $Z$. With this choice and if we further assume that for all $q >0$,
$\sup_{|x| \le q} \E(|U(x, Z)|^{2+\rho}) < \infty$ and that the function $x
\longmapsto \E(U(x,Z) U(x,Z)')$ is continuous at $x^\s$, then it is obvious that
Assumptions~\hypref{cv-ps} and~\hypref{chen-mart-cv} are satisfied. Note that in
this particular but widely used setting, there is no assumption to be checked
along the paths of the algorithm as it was the case in the results of Chen. This
considerably widens the range of applications as the assumptions of our theorems
boils down to basic regularity properties of the function $U$. Note also that we
do not impose any condition on the behaviour of the sequence $(p_n)_n$, i.e. on
the choice of the compact subsets $(K_n)_n$.

\section{Proofs of  Theorems \ref{thm:tcl-chen} and \ref{thm:tcl2-chen}}
\label{sec:proofs-tcl}

In this section, we prove the Theorems presented in Section~\ref{sec:chen-tcls}
through a series of three lemmas. The proofs of these lemmas are postponed to
Section~\ref{sec:lemmas-proof}.

\subsection{Technical lemmas}

For any fixed $n>0$, we introduce  $s_{n,k} = \sum_{i=0}^k \gamma_{n+i}$ for
$k \ge 0$ and we set $s_{n,0} = 0$. $(s_{n,k})_{k \ge 0}$ can be
interpreted as a discretisation grid of $[0,\infty)$ because $\lim_{k
  \rightarrow \infty} s_{n,k} = \infty$.

Theorems~\ref{thm:tcl-chen} and \ref{thm:tcl2-chen} are based on the
following three lemmas.
\begin{lemma}
  \label{lem:tight} Let $\varepsilon>0$ and $\eta >0$ as in
  Hypothesis~\hypref{chen-mart-cv}. There exists $N_0>0 $, such that if we
  define for $n \ge N_0$
  \begin{equation*}
    A_n = \left\{\sup_{n \geq m \geq N_0} \abs{X_m-x^{\star}} \le \eta \right\},
  \end{equation*}
  then
  \begin{equation*}
    \P(A_n)\ge 1-\varepsilon \quad \forall n \ge  N_0 \quad \text{and}\quad \sup_{n \geq
      N_0} \E\left( \abs{\Delta_n}^2 \ind{A_n}\right) < \infty.
  \end{equation*}
\end{lemma}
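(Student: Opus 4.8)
The idea is to combine two facts: the almost sure convergence $X_n \to x^\star$ (guaranteed by Hypothesis~\hypref{cv-ps}, cited from the companion paper) which gives the first claim $\P(A_n)\ge 1-\varepsilon$, and a recursive $L^2$ estimate for $\Delta_n \ind{A_n}$ that exploits the contractivity coming from Hypothesis~\hypreff{regularity}{C-1} (the matrix $A$ being positive definite) to obtain the uniform bound $\sup_n \E(|\Delta_n|^2 \ind{A_n}) < \infty$. Let me sketch each part.

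For the first claim: since $X_n \to x^\star$ a.s., the events $B_N = \{\sup_{m\ge N} |X_m - x^\star| \le \eta\}$ increase to a set of full probability, so we may pick $N_0$ with $\P(B_{N_0}) \ge 1-\varepsilon$; then $A_n \supseteq B_{N_0}$ for every $n \ge N_0$, giving $\P(A_n)\ge 1-\varepsilon$. Note also that on $A_n$ one automatically has $A_m = A_n \cap \{\text{no further excursion}\}$-type nesting: $A_{n+1} \subseteq A_n$ and $\ind{A_{n+1}} \le \ind{A_n}$.

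For the $L^2$ bound, which is the heart of the lemma: on the event $A_n$ (for $n \ge N_0$, with $N_0$ possibly enlarged) the iterates stay in the ball $\{|x-x^\star|\le\eta\}$, so no truncation occurs — $X_{m+1/2} \in K_{\sigma_m}$ for $N_0 \le m < n$ because $d(x^\star,\partial K_m)\ge \mu$ can be arranged $\ge \eta$ after finitely many steps (here I would actually take $\eta \le \mu$ from the start, shrinking it if necessary; Hypothesis~\hypref{chen-interior} is exactly what makes this work) — hence $p_{m+1}=0$ and the dynamics is the plain Robbins–Monro recursion $X_{m+1}-x^\star = X_m - x^\star - \gamma_{m+1} u(X_m) - \gamma_{m+1}\delta M_{m+1}$. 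Write $X_m - x^\star = A^{-1}u(X_m) - A^{-1}y(X_m-x^\star)(X_m-x^\star)$ via~\hypreff{regularity}{C-1}, or more directly substitute $u(X_m) = A(X_m-x^\star) + y(\cdot)(\cdot)$ to get
\begin{equation*}
  X_{m+1}-x^\star = (I - \gamma_{m+1} A)(X_m-x^\star) - \gamma_{m+1} y(X_m-x^\star)(X_m-x^\star) - \gamma_{m+1}\delta M_{m+1}.
\end{equation*}
Taking squared norms, using $\ind{A_{m+1}} \le \ind{A_m}$, conditioning on $\cf_m$ to kill the cross term with $\delta M_{m+1}$ (martingale increment), and bounding $\E(|\delta M_{m+1}|^2 \ind{|X_m-x^\star|\le\eta}|\cf_m)$ by the constant $\kappa$ from~\hypreff{chen-mart-cv}{ui} (Jensen reduces the $2+\rho$ moment to the second moment), one obtains
\begin{equation*}
  \E(|X_{m+1}-x^\star|^2 \ind{A_{m+1}}) \le (1 - 2\gamma_{m+1}\lambda_{\min}(A) + C\gamma_{m+1}\varepsilon_m + C\gamma_{m+1}^2)\, \E(|X_m-x^\star|^2\ind{A_m}) + C\gamma_{m+1}^2,
\end{equation*}
where $\varepsilon_m = \sup_{|x-x^\star|\le\eta}|y(x-x^\star)| \to 0$ as $\eta \to 0$; choosing $\eta$ small enough that $C\varepsilon_m \le \lambda_{\min}(A)$ makes the bracket $\le 1 - \gamma_{m+1}\lambda_{\min}(A) + C\gamma_{m+1}^2$. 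Dividing by $\gamma_{m+1}$ and relating $\gamma_m/\gamma_{m+1}$ (which is $1 + O(1/m) = 1 + O(\gamma_{m+1})$ for $\gamma_n = \gamma/(n+1)^\alpha$) converts this into a recursion for $a_m := \E(|\Delta_m|^2\ind{A_m})$ of the form $a_{m+1} \le (1 - c\gamma_{m+1} + C\gamma_{m+1}^2) a_m + C\gamma_{m+1}$ with $c>0$; a standard Gronwall/Chung-type lemma for such recursions (together with $\sum\gamma_m^2<\infty$) yields $\sup_m a_m < \infty$, which is the desired conclusion.

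The main obstacle is handling the boundary between the truncation regime and the unconstrained regime rigorously, and more precisely the bookkeeping that lets one pass from $\E(|X_{m+1}-x^\star|^2\ind{A_{m+1}})$ to a clean recursion: one must be careful that $\ind{A_m}$ is $\cf_m$-measurable (it is, since $A_m$ depends on $X_{N_0},\dots,X_m$, hence on $\delta M_{N_0},\dots,\delta M_m$) so that the conditioning step is legitimate, and that the step-ratio $\gamma_m/\gamma_{m+1}$ contributes only a lower-order $1 + O(\gamma_{m+1})$ correction — harmless when $\alpha \le 1$, which is exactly why the hypothesis restricts to this range. The localization indicator $\ind{|X_m-x^\star|\le\eta}$ inside~\hypreff{chen-mart-cv}{ui} matches precisely the indicator we get from $\ind{A_m} \le \ind{|X_m-x^\star|\le\eta}$, so the local moment assumption suffices and no global control on $(\delta M_n)_n$ is needed — this is the point the introduction advertises.
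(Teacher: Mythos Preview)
Your overall structure---get $\P(A_n)\ge 1-\varepsilon$ from almost sure convergence, then establish a contractive $L^2$ recursion for $a_m=\E(|\Delta_m|^2\ind{A_m})$ and conclude by a discrete Gronwall argument---matches the paper's, and those parts are fine. The gap is in the middle: you assert that on $A_n$ no truncation occurs, writing ``$X_{m+1/2}\in K_{\sigma_m}$ for $N_0\le m<n$ because $d(x^\star,\partial K_m)\ge\mu$''. This does not follow. The event $A_n$ controls only the \emph{post}-truncation iterates $X_m$; the truncation test is applied to $X_{m+1/2}=X_m-\gamma_{m+1}(u(X_m)+\delta M_{m+1})$, and since $\delta M_{m+1}$ carries only a moment bound (no almost-sure bound), $X_{m+1/2}$ can land outside $K_{\sigma_m}$ with positive probability even when $|X_m-x^\star|\le\eta$. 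Hypothesis~\hypref{chen-interior} guarantees that $X_m$ lies well inside $K_{\sigma_m}$ on $A_n$, but says nothing about $X_{m+1/2}$. (Your route can be repaired by also imposing $\eta<|X_0-x^\star|$: then on $A_n$ each $X_{m+1}$ with $N_0\le m<n$ lies in the $\eta$-ball and hence differs from $X_0$, so by contraposition $p_{m+1}=0$. But you did not make this argument, and it needs $X_0\ne x^\star$.)

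The paper does not try to rule out truncations on $A_n$. Instead it first writes the recursion for $\Delta_{m+1/2}$ (which is the clean Robbins--Monro step) and then controls the passage from $\E(|\Delta_{m+1/2}|^2\ind{A_m})$ to $\E(|\Delta_{m+1}|^2\ind{A_{m+1}})$ by bounding $\P\big(A_m\cap\{X_{m+1/2}\notin K_{\sigma_m}\}\big)$ via Markov's inequality, using that on $A_m$ one has $d(X_m,\partial K_{\sigma_m})\ge\mu-\eta\ge\mu/2$; this is where~\hypref{chen-interior} actually enters. The truncation probability is then $O(\gamma_{m+1}^2)$, and since a truncation resets $|\Delta_{m+1}|^2$ to $|X_0-x^\star|^2/\gamma_{m+1}$, the net contribution is an additive $O(\gamma_m)$ in the recursion for $a_m$---harmless. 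One further wrinkle you gloss over: for $\alpha=1$ the step-ratio correction $\gamma_m/\gamma_{m+1}=1+\gamma_m/\gamma+O(\gamma_m^2)$ is \emph{not} lower-order; it shifts the effective contraction matrix from $A$ to $Q=A-\tfrac{1}{2\gamma}I$, and positivity of $Q$ is precisely Hypothesis~\hypref{A-gamma-definite}, which must be invoked at that point.
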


\begin{lemma}
  \label{lem:delta-n-expr} For any integers $t>0$ and $n>0$
  \begin{eqnarray}
    \label{eq:deltanp}
    \Delta_{n+t} &  = &  e^{-s_{n,t} Q} \Delta_n
    -  \sum_{k=0}^{t-1} \expp{Q(s_{n,k}-s_{n,t})} \sqrt{\gamma_{n+k+1}} \delta
    M_{n+k+1} -  \sum_{k=0}^{t-1} \expp{Q(s_{n,k}-s_{n,t})} \gamma_{n+k} R_{n+k},
  \end{eqnarray}
  where 
  \begin{itemize}
    \item if $\alpha = 1$,
      \begin{equation}
        \begin{cases}
          Q & = A - \frac{1}{2\gamma} I \\  
          R_{m} & = - y(X_{m}-x^{\star}) \Delta_{m}  
           + \frac{1}{\sqrt{\gamma_{m+1}}}  p_{m+1} \\
           & \qquad +  \gamma_{m}
          (a_{m} I + b_{m} (A +y(X_{m}-x^{\star})) + \cO(\gamma_m)) \Delta_{m}, 
        \end{cases}
        \label{eq:reste1}
      \end{equation}
    \item if $1/2 < \alpha < 1$,
      \begin{equation}
        \begin{cases}
          Q & = A  \\  
          R_m & = y(X_{m}-x^{\star})) \Delta_m -
          \frac{1}{\sqrt{\gamma_{m+1}}} p_{m+1} \\
          & \qquad - \frac{1}{m \gamma_m} (a_m I + b_m 
          \gamma_n (A + y(X_{m}-x^{\star}))) \Delta_m + \cO (\gamma_m) \Delta_{m}
        \end{cases}
        \label{eq:restealpha}
      \end{equation}
      with $(a_n)_n$ and $(b_n)_n$ two real valued and bounded sequences.
  \end{itemize}
  Moreover, the last term in~\eqref{eq:deltanp} tends to zero in probability.
\end{lemma}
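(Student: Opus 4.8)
The plan is to establish the identity \eqref{eq:deltanp} by iterating a one‑step recursion for $\Delta_m$, and then to check that the last summand is negligible. For the recursion: starting from \eqref{eq:chen2}, I would subtract $x^\s$, insert the first–order expansion $u(x)=A(x-x^\s)+y(x-x^\s)(x-x^\s)$ of \hypreff{regularity}{C-1}, divide by $\sqrt{\gamma_{m+1}}$, and use $X_m-x^\s=\sqrt{\gamma_m}\,\Delta_m$, which gives
\begin{equation*}
  \Delta_{m+1}=\sqrt{\gamma_m/\gamma_{m+1}}\;\bigl(I-\gamma_{m+1}(A+y(X_m-x^\s))\bigr)\Delta_m-\sqrt{\gamma_{m+1}}\,\delta M_{m+1}+\sqrt{\gamma_{m+1}}\,p_{m+1}.
\end{equation*}
Since $\gamma_m=\gamma(m+1)^{-\alpha}$, one has $\sqrt{\gamma_m/\gamma_{m+1}}=1+\tfrac{\alpha}{2(m+1)}+\cO(m^{-2})$. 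When $\alpha=1$ the term $\tfrac1{2(m+1)}$ equals $\tfrac{\gamma_{m+1}}{2\gamma}+\cO(\gamma_{m+1}^2)$ and merges with $-\gamma_{m+1}A$ into the shifted drift $Q=A-\tfrac1{2\gamma}I$; when $1/2<\alpha<1$ it is only $o(\gamma_{m+1})$, writable as $\tfrac{a_m}{m\gamma_m}$ with $(a_m)_m$ bounded, so $Q=A$. Collecting the nonlinearity $y(X_m-x^\s)\Delta_m$, the truncation contribution $\gamma_{m+1}^{-1/2}p_{m+1}$, and the remaining $\cO(\gamma_m)\Delta_m$ corrections into one term yields $\Delta_{m+1}=(I-\gamma_{m+1}Q)\Delta_m-\sqrt{\gamma_{m+1}}\,\delta M_{m+1}-\gamma_{m+1}R_m$ with $R_m$ of the announced shape \eqref{eq:reste1}, resp. \eqref{eq:restealpha}, up to the harmless reindexing $\gamma_{m+1}\leftrightarrow\gamma_m$ (ratios tending to $1$).

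Next I would iterate this recursion over $t$ steps and replace each finite product $\prod_j(I-\gamma_{n+j}Q)$ by the matrix exponential $\expp{Q(s_{n,k}-s_{n,t})}$. As all factors are polynomials in $Q$ they commute, and $\log(I-\gamma_{n+j}Q)=-\gamma_{n+j}Q+\cO(\gamma_{n+j}^2)$ shows that the product equals $\expp{Q(s_{n,k}-s_{n,t})}(I+E_{n,k,t})$ with $\norm{E_{n,k,t}}\le C\sum_{j>n}\gamma_j^2\to0$ uniformly in $k,t$. After this replacement the residual errors are either of the $\cO(\gamma_m)\Delta_m$ type already present — hence reabsorbed into $R_m$, which is what the bounded sequences $(a_m)_m,(b_m)_m$ in \eqref{eq:reste1}--\eqref{eq:restealpha} ultimately encode — or martingale transforms of the $\delta M$'s with a uniformly $o(1)$ multiplier, whose $L^2$ norm over the events of Lemma~\ref{lem:tight} vanishes by \hypreff{chen-mart-cv}{ui}. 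This produces \eqref{eq:deltanp}.

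For the last term, set $Z_{n,t}=\sum_{k=0}^{t-1}\expp{Q(s_{n,k}-s_{n,t})}\gamma_{n+k}R_{n+k}$. Here $Q$ is symmetric positive definite — by \hypreff{regularity}{C-1} when $1/2<\alpha<1$, by \hypref{A-gamma-definite} when $\alpha=1$ — so $\norm{\expp{Q(s_{n,k}-s_{n,t})}}\le\expp{-\lambda(s_{n,t}-s_{n,k})}$ with $\lambda=\lambda_{\min}(Q)>0$, and a Riemann‑sum comparison with $\int_0^\infty\expp{-\lambda s}\,ds$ gives $\sum_{k=0}^{t-1}\norm{\expp{Q(s_{n,k}-s_{n,t})}}\gamma_{n+k}\le C$ uniformly in $n,t$. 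I would then bound the three pieces of $R_m$ in turn. The truncation piece vanishes identically for $m$ beyond the last truncation time, which is a.s.\ finite since $\sigma_n$ is (Remark~\ref{rem_hypo}); so off an event of probability $<\varepsilon$ it contributes nothing to $Z_{n,t}$ once $n$ is large — the ``smart'' handling advertised after Theorem~\ref{thm:tcl2-chen}. For the nonlinear piece, since $X_m\to x^\s$ a.s.\ and $\abs{y(x)}\to0$ as $\abs{x}\to0$, for $n$ large $\sup_{m\ge n}\abs{y(X_m-x^\s)}\le\delta$ off a small set; on its intersection with the good event $A_{n+t}$ of Lemma~\ref{lem:tight}, using $\sup_m\E(\abs{\Delta_m}^2\ind{A_m})<\infty$, Cauchy--Schwarz and the bounded weight sum give an expectation $\le C\delta$, so Markov's inequality closes it. The last piece carries an extra $\gamma_m$ with bounded coefficients, hence has expectation $\le C\gamma_{n+1}\to0$ on $A_{n+t}$. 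Altogether $Z_{n,t}\to0$ in probability (for $n\ge N_0$ as $t\to\infty$, up to the $\varepsilon$ built into Lemma~\ref{lem:tight}).

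The main obstacle is not any single estimate but the bookkeeping in the first two steps: three distinct sources of $\cO(\gamma_m)$ corrections — the ratio $\sqrt{\gamma_m/\gamma_{m+1}}$, the remainder in the expansion of $u$, and the product‑versus‑exponential discrepancy — must be regrouped and shown to fit the prescribed form of $R_m$ with genuinely \emph{bounded} $(a_m)_m,(b_m)_m$, and the $\alpha=1$ versus $1/2<\alpha<1$ cases must be separated (the $\tfrac1{2\gamma}I$ shift is $\cO(\gamma_m)$-relative only when $\alpha=1$). In the last step, the subtle point is that the fixed $\eta$ of \hypreff{chen-mart-cv}{ui} does not by itself make $\abs{y(X_m-x^\s)}$ small; one must import the almost sure convergence $X_m\to x^\s$ and marry it with the $L^2$-on-good-events bound of Lemma~\ref{lem:tight}, via a truncation of the probability space and Markov's inequality.
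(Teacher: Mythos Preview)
Your derivation of the one-step recursion and your treatment of the remainder sum are essentially the paper's; the difference lies in how you pass from the iterated product to the exponential, and there is a genuine gap there. When you iterate first and then replace each $\prod_j(I-\gamma_{n+j}Q)$ by $\expp{Q(s_{n,k}-s_{n,t})}$, the discrepancy hits the martingale sum as well, producing an extra term $\sum_k E_{n,k+1,t}\sqrt{\gamma_{n+k+1}}\,\delta M_{n+k+1}$. Your bound $\norm{E_{n,k,t}}\le C\sum_{j>n}\gamma_j^2$ is correct, but this is a constant $c_n$ in $k,t$ that goes to zero only as $n\to\infty$; more precisely $E_{n,k,t}=\expp{Q(s_{n,k}-s_{n,t})}\cdot\cO(c_n)$, so the $L^2$ norm of that martingale transform on the events of Lemma~\ref{lem:tight} is of order $c_n^2\sum_k\gamma_{n+k}\expp{-2\lambda(s_{n,t}-s_{n,k})}$, which stays bounded away from zero as $t\to\infty$ for fixed $n$. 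Since the Lemma asserts an exact identity for every $n,t$ with $R_m$ independent of $t$, and the convergence of the last term is taken as $t\to\infty$ with $n$ fixed, this residual can neither be declared negligible nor absorbed into $R_m$.

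The paper avoids this by reversing the order: it multiplies the one-step recursion by $\expp{s_{n,k}Q}$ \emph{before} summing, and uses $\expp{s_{n,k}Q}(I-\gamma_{n+k-1}Q)=\expp{s_{n,k-1}Q}(I+\cO(\gamma_{n+k-1}^2))$. The $\cO(\gamma^2)$ correction then multiplies $\Delta_{n+k-1}$ only, never the martingale increment, and is legitimately absorbed into $R$; the sum over $k$ telescopes and the exponential weights on $\delta M$ come out exact. This is precisely the device that resolves your ``bookkeeping'' concern without generating a $t$-dependent remainder.

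For the negligibility of $\sum_k\expp{Q(s_{n,k}-s_{n,t})}\gamma_{n+k}R_{n+k}$, your argument matches the paper's in substance: $p_m=0$ eventually a.s., $y(X_m-x^\s)\to0$ combined with the $L^2$ bound of Lemma~\ref{lem:tight}, and an extra $\gamma_m$ on the third piece. The paper packages these through two abstract lemmas on weighted sums (Propositions~\ref{prop:integral-convergence-ps} and~\ref{prop:integral-convergence}), splitting $R_m$ by the indicator $\ind{|X_m-x^\s|\le\eta}$ into an a.s.-vanishing part and a uniformly integrable part tending to zero in probability; your direct Cauchy--Schwarz/Markov route on the events $A_n$ is an equivalent alternative.
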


\begin{lemma}
  \label{lem:int-sto-conv} In Equation~(\ref{eq:deltanp}), the sequence
  $(\sum_{k=0}^t \expp{Q(s_{n,k}-s_{n,t})} \sqrt{\gamma_{n+k}} \delta
  M_{n+k})_t$ converges in distribution  to $\cn(0,V_n)$ for any fixed $n$ when
  $p$ goes to infinity, where $V_n = \sum_{k=0}^\infty
  \gamma_{n+k}\expp{-Qs_{n,k}} \Sigma \expp{-Qs_{n,k}}$.
\end{lemma}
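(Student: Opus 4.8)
The plan is to apply a central limit theorem for triangular arrays of martingale increments (the Lindeberg--Feller type result for martingale arrays), along the lines of the classical statement in Hall--Heyde. Fix $n$. For $k \ge 0$ set $\xi^{(t)}_k = \expp{Q(s_{n,k}-s_{n,t})} \sqrt{\gamma_{n+k}}\, \delta M_{n+k}$, so that the quantity of interest is $S_t = \sum_{k=0}^t \xi^{(t)}_k$. Each $\xi^{(t)}_k$ is $\cf_{n+k}$-measurable with $\E(\xi^{(t)}_k \mid \cf_{n+k-1}) = 0$, so for each $t$ the family $(\xi^{(t)}_k)_{0 \le k \le t}$ is a martingale-difference array. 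The two conditions to verify are: (i) convergence of the conditional variance, $\sum_{k=0}^t \E(\xi^{(t)}_k (\xi^{(t)}_k)' \mid \cf_{n+k-1}) \to V_n$ in probability as $t \to \infty$; and (ii) a conditional Lindeberg condition, $\sum_{k=0}^t \E(\abs{\xi^{(t)}_k}^2 \ind{\abs{\xi^{(t)}_k} > \delta} \mid \cf_{n+k-1}) \to 0$ in probability for every $\delta>0$. Granting these, the cited martingale CLT yields $S_t \xrightarrow{d} \cn(0,V_n)$.

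For condition (i), the conditional variance is $\sum_{k=0}^t \gamma_{n+k}\, \expp{Q(s_{n,k}-s_{n,t})} \E(\delta M_{n+k}\delta M_{n+k}' \mid \cf_{n+k-1}) \expp{Q'(s_{n,k}-s_{n,t})}$. Here one needs to be slightly careful: Hypothesis~\hypreff{chen-mart-cv}{cv_bracket} only gives convergence of $\E(\delta M_n \delta M_n'\mid\cf_{n-1})\ind{\abs{X_{n-1}-x^\s}\le\eta}$ to $\Sigma$, so the argument should be run on the event $A_t$ of Lemma~\ref{lem:tight} (or one restricts attention to that event throughout the proof of the theorem, which is how the lemmas are used downstream), where the indicators are all equal to $1$. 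On that event the conditional covariances are, for large indices, close to $\Sigma$, and $\gamma_{n+k} = s_{n,k+1}-s_{n,k}$ is the mesh of the grid $(s_{n,j})_j$; so the sum is essentially a Riemann sum for $\int_0^{s_{n,t}} \expp{Q(s - s_{n,t})} \Sigma \expp{Q'(s-s_{n,t})}\, ds$. Substituting $r = s_{n,t}-s$ and letting $t\to\infty$ (so $s_{n,t}\to\infty$) gives $\int_0^\infty \expp{-Qr}\Sigma\expp{-Q'r}\,dr$; one then checks this equals the stated $V_n = \sum_{k=0}^\infty \gamma_{n+k}\expp{-Qs_{n,k}}\Sigma\expp{-Qs_{n,k}}$ — indeed the series is itself the analogous Riemann sum and both the series and the integral converge because $Q$ has all eigenvalues with positive real part ($Q = A$, or $Q = A - \tfrac{1}{2\gamma}I$ which is positive definite under \hypref{A-gamma-definite}), so $\norm{\expp{-Qr}}$ decays exponentially. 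Making the Riemann-sum approximation rigorous — controlling the difference between the discrete sum with slowly-varying (but converging only in probability) conditional covariances and the limiting integral, uniformly enough to pass to the limit — is the step that requires the most care; splitting the sum at a large fixed index $K$ and using the exponential decay of $\expp{Q(s_{n,k}-s_{n,t})}$ for $k$ far from $t$ together with the $\P$-convergence for $k \ge K$ handles it.

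Condition (ii) is where Hypothesis~\hypreff{chen-mart-cv}{ui} enters. Since $\abs{\xi^{(t)}_k} \le C\sqrt{\gamma_{n+k}}\,\abs{\delta M_{n+k}}$ with $C = \sup_{r\ge 0}\norm{\expp{-Qr}}<\infty$, and again working on $A_t$ so that the indicator $\ind{\abs{X_{n+k-1}-x^\s}\le\eta}$ may be inserted for free, Markov's inequality gives
\begin{equation*}
  \E\!\left(\abs{\xi^{(t)}_k}^2 \ind{\abs{\xi^{(t)}_k}>\delta}\,\big|\,\cf_{n+k-1}\right)
  \le \frac{C^{2+\rho}\gamma_{n+k}^{1+\rho/2}}{\delta^\rho}\,
  \E\!\left(\abs{\delta M_{n+k}}^{2+\rho}\ind{\abs{X_{n+k-1}-x^\s}\le\eta}\,\big|\,\cf_{n+k-1}\right).
\end{equation*}
Summing over $k$ and taking expectations, the right-hand side is bounded by $\delta^{-\rho}C^{2+\rho}\kappa \sum_{k}\gamma_{n+k}^{1+\rho/2}$, which is finite since $\gamma_m = \gamma/(m+1)^\alpha$ with $\alpha>1/2$ forces $\sum_m \gamma_m^{1+\rho/2}<\infty$ (the exponent $\alpha(1+\rho/2)>1/2+\rho\alpha/2$; one may further shrink $\rho$ if needed so that $\alpha(1+\rho/2)>1$, or more simply note $\sum\gamma_m^{2}<\infty$ already handles the tail since the summand is $\le \gamma_{n+k}^{\rho/2}\gamma_{n+k}$ and $\gamma\to 0$). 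Hence the tail of the Lindeberg sum is small, which gives convergence to $0$ in $L^1$, hence in probability, uniformly in $t$ — so (ii) holds. This completes the verification and the lemma follows from the martingale array CLT.
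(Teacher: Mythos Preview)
Your overall architecture is the same as the paper's: apply a CLT for martingale arrays (the paper quotes Duflo's Theorem~2.1.9, stated here as Theorem~\ref{thm:tcl-array}) after checking a bracket condition and a Lyapunov condition. Two points, however, need attention.

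\medskip

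\textbf{The Lyapunov/Lindeberg verification has a genuine gap.} In your bound for condition~(ii) you replace $\norm{\expp{Q(s_{n,k}-s_{n,t})}}$ by the uniform constant $C=\sup_{r\ge 0}\norm{\expp{-Qr}}$. With that replacement the summed bound becomes
\[
\frac{C^{2+\rho}\kappa}{\delta^{\rho}}\sum_{k=0}^{t}\gamma_{n+k}^{1+\rho/2},
\]
which is \emph{increasing} in $t$ and tends to the finite constant $\frac{C^{2+\rho}\kappa}{\delta^{\rho}}\sum_{k\ge 0}\gamma_{n+k}^{1+\rho/2}$, not to $0$. A bounded Lindeberg sum is not a vanishing one, so the conclusion does not follow from what you wrote. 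The exponential factor is not a nuisance to be discarded; it is precisely what makes the sum a weighted (Ces\`aro--Abel type) average that forgets the early terms. The paper keeps the factor and writes the Lyapunov sum as
\[
\sum_{k=1}^{t}\gamma_{n+k}\,\expp{(2+\rho)Q(s_{n,k}-s_{n,t})}\,\Bigl[\gamma_{n+k}^{\rho/2}\,\E\bigl(\abs{\delta M_{n+k}}^{2+\rho}\ind{\abs{X_{n+k-1}-x^\s}\le\eta}\bigr)\Bigr],
\]
observes that the bracketed term tends to $0$ (bounded expectation times $\gamma_{n+k}^{\rho/2}\to 0$), and then invokes Proposition~\ref{prop:integral-convergence-ps} (applied with $(2+\rho)Q$ in place of $Q$ and limit $x=0$) to conclude that the whole sum tends to $0$. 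Your argument is easily repaired along these lines, but as written it does not close.

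\medskip

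\textbf{Localisation.} Saying ``work on the event $A_t$'' is not quite a legal move for a convergence in distribution of the full sequence. The paper's device is cleaner and you essentially have it in your parenthetical: split each increment as $\delta M_{n+k}=\delta M_{n+k}\ind{\abs{X_{n+k-1}-x^\s}\le\eta}+\delta M_{n+k}\ind{\abs{X_{n+k-1}-x^\s}>\eta}$. The second piece contributes a sum that tends to $0$ a.s.\ (the indicator vanishes for all large $k$ by a.s.\ convergence of $X_m$, and Proposition~\ref{prop:integral-convergence-ps} handles the exponential weighting), while the first piece is still a martingale-difference array to which Hypotheses~\hypref{chen-mart-cv} apply directly. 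This is what should replace the appeal to $A_t$; the sets $A_n$ are used in the paper only for Lemma~\ref{lem:tight}, not here.

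\medskip

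Your treatment of the bracket (condition~(i)) is in the right spirit; the paper packages the ``Riemann sum plus convergence in probability of the integrand'' step as Proposition~\ref{prop:integral-convergence}, which is exactly the ``split at a large $K$ and use exponential decay'' argument you sketch.
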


\begin{proof}[Proof of Theorems \ref{thm:tcl-chen} and \ref{thm:tcl2-chen}]
  Let us consider Equation~(\ref{eq:deltanp}) for a fixed $n > N_0$, where $N_0$
  is defined in Lemma~\ref{lem:tight}. Because the matrix $Q$ is definite
  positive and $\Delta_n$ is almost surely finite, $e^{-s_{n,t} Q} \Delta_n$
  tends to zero almost surely when $t$ goes to infinity. Thanks to
  Lemma~\ref{lem:delta-n-expr}, the last term in Equation~\eqref{eq:deltanp}
  tends to zero in probability when $t$ goes to infinity.

  Combining these two convergences in probability to zero with
  Lemma~\ref{lem:int-sto-conv} yields the convergence in distribution of
  $(\Delta_{n+t})_t$ to a normal random variable with mean $0$ and variance $V$
  when $p$ goes to infinity, where $V$ is defined in
  Lemma~\ref{lem:int-sto-conv}. Plugging the value of the matrix $Q$ (see
  Equations~(\ref{eq:reste1}) and~(\ref{eq:restealpha})) in the expression of
  $V$ yields the result.
\end{proof}
Note that the proof for the classical Robbins Monro algorithm is much
simpler since we do not need to introduce the $A_n$ sets, which are only used
here to handle the truncation terms.

\subsection{Proofs of the lemmas}
\label{sec:lemmas-proof}

\subsubsection{Proof of Lemma \ref{lem:tight}}

We only do the proof in the case $\alpha=1$, as in the other case, it is
sufficient to slightly modify a few Taylor expansions and the same results still
hold.  From Equation~(\ref{eq:chen2}), we have the following recursive relation
\begin{equation*}
  \Delta_{n+1} = \frac{X_{n+1} - x^{\star}}{\sqrt{\gamma_{n+1}}}
  = \sqrt{\frac{\gamma_n}{\gamma_{n+1}}} \: \Delta_n -
  \sqrt{\gamma_{n+1}} (u(X_n) + \delta M_{n+1} - p_{n+1}).
\end{equation*}
Using Hypothesis \hypreff{regularity}{C-1}, the previous equation becomes
\begin{equation}
  \label{eq:2}
  \Delta_{n+1} = \displaystyle \left(\sqrt{\frac{\gamma_n}{\gamma_{n+1}}} I -
    \sqrt{\gamma_{n+1} \gamma_n} (A + y(X_n-x^{\star}))\right) \Delta_n
  -  \sqrt{\gamma_{n+1}}\delta M_{n+1} + \sqrt{\gamma_{n+1}} p_{n+1}.  
\end{equation}
The following Taylor expansions hold
\begin{equation}
  \label{eq:8}
  \sqrt{\frac{\gamma_n}{\gamma_{n+1}}} =  1 + \frac{\gamma_n}{2 \gamma} 
  +\cO\left(\gamma_n^2\right) \mbox{ and } 
  \sqrt{\gamma_n \gamma_{n+1}} = \gamma_n + \cO\left(\gamma_n^2\right).
\end{equation}
There exist two real valued and bounded sequences $(a_n)_n$ and $(b_n)_n$ such that
\begin{equation*}
  \sqrt{\frac{\gamma_n}{\gamma_{n+1}}} =  1 + \frac{\gamma_n}{2 \gamma} 
  +\gamma_n^2 a_n \mbox{ and } 
  \sqrt{\gamma_n \gamma_{n+1}} = \gamma_n + \gamma_n^2 b_n.
\end{equation*}
This enables us to simplify Equation (\ref{eq:2})
\begin{align}
  \label{eq:3}
   \Delta_{n+1} =  &  \Delta_n - \gamma_n  Q \Delta_n   - \gamma_n
   y(X_n-x^{\star}) \Delta_n  - \sqrt{\gamma_{n+1}}\delta M_{n+1} \nonumber\\ 
  & + \sqrt{\gamma_{n+1}}  p_{n+1} +  \gamma_{n}^2
  (a_n I + b_n (A +y(X_n-x^{\star}))) \Delta_n, 
\end{align}
where $Q = A-\frac{I}{2 \gamma}$.
Let $\displaystyle  \Delta_{n+\frac{1}{2}} = \frac{X_{n+ \frac{1}{2}} -
  x^{\star}}{\sqrt{\gamma_{n+1}}}$, where  $X_{n+ \frac{1}{2}}$, defined by
Equation~\eqref{eq:chen}, is the value of the new iterate obtained before
truncation.
\begin{align*}
  \abs{\Delta_{n+\frac{1}{2}}}^2 = & 
  \left| \Delta_n - \gamma_n Q \Delta_n - \gamma_n y(X_n-x^{\star}) \Delta_n -
    \sqrt{\gamma_{n+1}}\delta M_{n+1} \right.\\
   & \quad \left.   + \gamma_{n}^2 (a_n I + b_n (A +y(X_n-x^{\star}) I))
     \Delta_n\right|^2 \\
   \le & | \Delta_n - \gamma_n Q \Delta_n - \gamma_n y(X_n-x^{\star})
   \Delta_n|^2 + \gamma_{n+1} |\delta M_{n+1}|^2 \\
   & \quad + \gamma_{n}^4 |(a_n I + b_n (A +y(X_n-x^{\star}) I)) \Delta_n|^2\\
   & \quad + 2 \gamma_n (\Delta_n - \gamma_n  Q \Delta_n   - \gamma_n
   y(X_n-x^{\star}) \Delta_n)' \delta M_{n+1} \\
   & \quad + 2 \gamma_n^4 ((a_n I + b_n (A +y(X_n-x^{\star}) I)) \Delta_n)'
   (\Delta_n - \gamma_n  Q \Delta_n   - \gamma_n
   y(X_n-x^{\star}) \Delta_n) \\
   & \quad + 2 \gamma_{n}^{5/2} \delta M_{n+1}' (a_n I + b_n (A +y(X_n-x^{\star})
   I)) \Delta M_n.
\end{align*}
If we take the conditional expectation with respect to
$\cf_n$ \textemdash\ denoted $\E_n$ \textemdash\ in the previous equality, we find
\begin{align*}
  \E_n\abs{\Delta_{n+\frac{1}{2}}}^2 \le 
  & | \Delta_n - \gamma_n Q \Delta_n - \gamma_n y(X_n-x^{\star})
   \Delta_n|^2 + \gamma_{n+1} \E_n |\delta M_{n+1}|^2 \\
   & \quad + \gamma_{n}^4  \norm{a_n I + b_n (A +y(X_n-x^{\star}) I)}^2 |\Delta_n|^2\\
   & \quad + 2 \gamma_n^4 \norm{(a_n I + b_n (A +y(X_n-x^{\star}) I))} (1 +
   \gamma_n  \norm{Q +  y(X_n-x^{\star})}) |\Delta_n|^2.
\end{align*}
\begin{align}
  \label{eq:half-norm}
  \E_n\left(\abs{\Delta_{n+\frac{1}{2}}}^2\right)  \leq & \abs{\Delta_n}^2 -2
  \gamma_{n} {\Delta_n}^\prime  (Q+y(X_n-x^{\star})) \Delta_n + \gamma_{n+1}
  \E_n|\delta M_{n+1}|^2 \nonumber\\
  & \quad + \cO\left(\gamma_n^2\right) (1+\abs{y(X_n-x^{\star})}) \abs{\Delta_n}^2.
\end{align}
Note that in the previous equation the quantity $\cO\left(\gamma_n^2\right)$ is non random.

Let $\lambda>0$ be the smallest eigenvalue of $Q$, which is symmetric definite
positive. Since $\lim_{\abs{x} \rightarrow 0} y(x) = 0$, there exists $\eta >
0$ such that for all $|x|<\eta$, $\abs{y(x)} < \lambda/2$. We assume that
this value of $\eta$ satisfies Hypothesis~\hypref{chen-mart-cv}.  Let
$\varepsilon>0$. Since $(X_n)_n$ converges almost surely to $x^\s$, there
exists a rank $N_0$ such that
\begin{equation*}
  \P(\sup_{m>N_0} \abs{X_m-x^{\star}} > \eta ) < \varepsilon.
\end{equation*}
Hence, $\P(A_n) \ge 1-\varepsilon$ for all $n>N_0$.

On the set $A_n$, $Q+y(X_n-x^{\star})$ is a positive definite matrix with
smallest eigenvalue greater than $\lambda/2$.  Therefore ${\Delta_n}^\prime (Q +
y(X_n-x^{\star}))\Delta_n > \lambda/2 \abs{\Delta_n}^2$. Hence, we can deduce
from Equation~\eqref{eq:half-norm} that
\begin{align*}
  \E\left(\abs{\Delta_{n+\frac{1}{2}}}^2 \ind{A_n}\right) -
  \E\left(\abs{\Delta_n}^2 \ind{A_n} \right) \leq &
  - \gamma_n \lambda \E\left(\abs{\Delta_n}^2 \ind{A_n}\right)
  + \gamma_n \kappa \\
  & + \cO(\gamma_n^{2}) (1 + \frac{1}{2} \lambda)
  \E\left(\abs{\Delta_n}^2 \ind{A_n}\right).
\end{align*}
We can assume that for $n>N_0, \quad |\cO(\gamma_n^2) (1 + \lambda/2)| \leq
\gamma_n \lambda/2$. Hence we get, for $n \ge N_0$,
\begin{align*}
  \E\left(\abs{\Delta_{n+\frac{1}{2}}}^2 \ind{A_n}\right) -
  \E\left(\abs{\Delta_n}^2 \ind{A_n} \right) \leq &
  - \gamma_n \frac{\lambda}{2} \E\left(\abs{\Delta_n}^2 \ind{A_n}\right)
  + \gamma_n \kappa
\end{align*}
Since $A_{n+1} \subset A_n$,
\begin{equation}
  \label{eq:83}
   \E\left(\abs{\Delta_{n+\frac{1}{2}}}^2 \ind{A_{n+1}}\right) -
  \E\left(\abs{\Delta_n}^2 \ind{A_n} \right)  \leq 
  - \gamma_n \frac{\lambda}{2} \E\left(\abs{\Delta_n}^2 \ind{A_n}\right) +  \kappa \gamma_n, 
\end{equation}
Now, we would like to replace $\Delta_{n+\frac{1}{2}}$ by $\Delta_{n+1}$ in
Equation~\eqref{eq:83}.
\begin{eqnarray*}
  \abs{\Delta_{n+1}}^2  & = & \frac{\abs{X_0 -
      x^\star}^2}{\gamma_{n+1}} \ind{p_{n+1} \neq 0} + 
  \abs{\Delta_{n+\frac{1}{2}}}^2 \ind{p_{n+1}=0},\\
  \abs{\Delta_{n+1}}^2  & \leq & \abs{\Delta_{n+\frac{1}{2}}}^2  +
  \frac{\abs{X_0 - x^\star}^2}{\gamma_{n+1}}   \ind{X_{n+\frac{1}{2}}
    \notin K_{\sigma_n}}.
\end{eqnarray*}
Taking the conditional expectation w.r.t. $\cf_n$ on the set $A_n$ gives
\begin{eqnarray}
  \label{eq:22}
  \E_n \abs{\Delta_{n+1}}^2  & \leq & \E_n \abs{\Delta_{n+\frac{1}{2}}}^2 +
  \frac{\abs{X_0 - x^\star}^2}{\gamma_{n+1}} 
  \P\left(X_{n+\frac{1}{2}} \notin 
    K_{\sigma_n} | \cf_n\right) , \nonumber \\
  \E_n \abs{\Delta_{n+1}}^2 \ind{A_n} & \leq &   \E_n \abs{\Delta_{n+\frac{1}{2}}}^2
  \ind{A_n} + \frac{\abs{X_0 - 
      x^\star}^2}{\gamma_{n+1}} \ind{A_n} \P\left(X_{n+\frac{1}{2}} \notin 
    K_{\sigma_n} | \cf_n\right),\nonumber \\
  \E \left(\abs{\Delta_{n+1}}^2 \ind{A_{n+1}}\right) & \leq & \E\left(
    \abs{\Delta_{n+\frac{1}{2}}}^2 \ind{A_n}\right) + \frac{\abs{X_0 -
        x^\star}^2}{\gamma_{n+1}} \P\left(A_n \cap \{X_{n+\frac{1}{2}}
        \notin K_{\sigma_n}\}  \right).
\end{eqnarray}
The probability on the right hand side can be rewritten
\begin{align*}
  \P\left(A_n \cap \{X_{n+\frac{1}{2}}
  \notin K_{\sigma_n}\}  \right) & = \E\left(  \ind{\gamma_{n+1} \abs{u(X_n)
      + \delta M_{n+1}} \geq d( X_n , \partial K_{\sigma_n}) }
  \ind{A_n} \right) 
\end{align*}
Moreover using the triangle inequality, we have $d( X_n ,\partial K_{\sigma_n})
\geq  d( x^{\star} ,\partial K_{\sigma_n}) - \abs{X_n -x^{\star}}$.  Due to
Hypothesis \hypref{chen-interior}, $d\left( x^{\star} ,\partial K_{\sigma_n}
\right) \ge \mu$ and on $A_n, \quad \abs{X_n -x^{\star}} \leq \eta$. Hence,
$d\left( X_n ,\partial K_{\sigma_n} \right)  \geq  \mu - \eta$.  One can choose
$\eta < \mu/2$ for instance, so that $d( X_n ,\partial K_{\sigma_n})
> \frac{\mu}{2}$. 
\begin{align}
  \label{eq:proba-proj}
  \P\left(A_n \cap \{X_{n+\frac{1}{2}} \notin K_{\sigma_n}\}  \right)  & \leq 
  \E \left(\E_n \left(\ind{\gamma_{n+1} \abs{u(X_n) + \delta M_{n+1}} \geq
    \frac{\mu}{2}}\right) \ind{A_n}\right) ,\nonumber \\
     & \leq \frac{8
    \gamma_{n+1}^2}{\mu^2} \E\left(\abs{u(X_n)}^2 \ind{A_n} + 
    \abs{\delta M_{n+1}}^2 ) \ind{A_n} \right).
  \end{align}
Thanks to Hypothesis~\hypref{chen-mart-cv} and the continuity of $u$, the
expectation on the r.h.s of~\eqref{eq:proba-proj} is bounded by a constant
$\bar c>0$ independent of $n$.  So, we get
\begin{equation*}
  \P\left(X_{n+\frac{1}{2}} \notin
      K_{\sigma_n}, A_n \right) \leq  \bar c \gamma_{n+1}^2.
\end{equation*}
Hence, from Equation (\ref{eq:22}) we can deduce
\begin{equation}
  \label{eq:11}
  \E \left(\abs{\Delta_{n+1}}^2 \ind{A_{n+1}}\right) \leq \E\left(
    \abs{\Delta_{n+\frac{1}{2}}}^2 \ind{A_n}\right) + \bar c \gamma_{n}.
\end{equation}
By combining Equations (\ref{eq:11}) and (\ref{eq:83}), we come up with
\begin{eqnarray*}
  \E \left(\abs{\Delta_{n+1}}^2 \ind{A_{n+1}}\right) & \leq &
  \left(1 - \gamma_n \frac{\lambda}{2}\right)
  \E\left(\abs{\Delta_n}^2 \ind{A_n}\right) + c \gamma_n,  
\end{eqnarray*}
where $c = \bar c + \kappa$.

Let $\ci = \left\{i \ge N_0:- \frac{\lambda}{2} \E\left(\abs{\Delta_i}^2
    \ind{A_i}\right) + c >0\right\}$, then
\begin{equation*}
  \sup_{i \in \ci} \E\left(\abs{\Delta_i}^2 \ind{A_i}\right) < \frac{2c}{\lambda} <
  \infty.
\end{equation*}
Note that we can always assume that $2c / \lambda \ge
\E\left(\abs{\Delta_{N_0}}^2 \ind{A_{N_0}}\right)$, such that the set $\ci$ is
non empty.
Assume $i \notin \ci$, let $i_0 = \sup\{ k < i \; : \; k \in \ci\}$. 
\begin{align*}
\E\left(\abs{\Delta_{i}}^2 \ind{A_{i}}\right) -
\E\left(\abs{\Delta_{i_0}}^2 \ind{A_{i_0}}\right) & \leq \sum_{k=i_0}^{i-1}
\gamma_k \left( c - \frac{\lambda}{2}  \E\left(\abs{\Delta_k}^2
\ind{A_k}\right)\right) 
\end{align*}
Since all the terms for $k=i_0+1, \dots, i-1$ are negative and $i_0 \in \ci$, we find
\begin{align*}
\E\left(\abs{\Delta_{i}}^2 \ind{A_{i}}\right) & \leq \gamma_{i_0} c +
\frac{2c}{\lambda}. 
\end{align*}
Finally, we come with the following upper bound.
\begin{equation*}
  \sup_{n \ge N_0} \E\left(\abs{\Delta_{n}}^2 \ind{A_n}\right) < \infty. 
\end{equation*}
% Then, the tightness of the sequence ${(\Delta_n)}_n$ follows quite naturally.
% \begin{align}
%   \label{eq:delta_tension}
%   \P(\abs{\Delta_n}>M) & \leq  \P(\abs{\Delta_n}(\ind{A_n} +
%   \ind{A_n^c})>M),\nonumber \\
%   & \leq  \P(\abs{\Delta_n}\ind{A_n} > M/2) + \P(
%   \abs{\Delta_n}\ind{A_n^c})>M/2),\nonumber \\
%   & \leq  4/M^2 \: \E\left(\abs{\Delta_n}^2\ind{A_n}\right) + \P(A_n^c).
% \end{align}
% There exists a value of $M$  such that both terms on
% the right hand-side of~(\ref{eq:delta_tension}) are bounded above by $\varepsilon$. This
% proves the tightness of $(\Delta_n)_n$ and ends the proof of Lemma~\ref{lem:tight}.

\begin{rem}[case $1/2<\alpha<1$]
  \label{rem:alpha-less-1}
  This proof is still valid for $\alpha<1$ if we replace the Taylor expansions of
  Equation~(\ref{eq:8}) by
  \begin{equation*}
    \sqrt{\frac{\gamma_n}{\gamma_{n+1}}} =  1 + \frac{a_n}{n} \mbox{ and } 
    \sqrt{\gamma_n \gamma_{n+1}} = \gamma_n + \frac{\gamma_n b_n}{n}.
  \end{equation*}
  Then, Equation~(\ref{eq:3}) becomes
  \begin{align*}
    \qquad \Delta_{n+1} = & \Delta_n - \gamma_n  Q \Delta_n   - \gamma_n
    y(X_n-x^{\star}) \Delta_n  - \sqrt{\gamma_{n+1}}\delta M_{n+1} \\
    & \quad + \sqrt{\gamma_{n+1}}  p_{n+1} + \frac{1}{n}
    (a_n I + b_n \gamma_n (A + y(X_n-x^{\star})) \Delta_n, 
  \end{align*}
  with $Q=A$ this time, which is still positive definite.
\end{rem}

\subsubsection{Proof of Lemma \ref{lem:delta-n-expr}}

Let us go back to Equation (\ref{eq:3}). For any $n > N_0$ and $k > 0$, we can
write
\begin{align*}
   \Delta_{n+k} =  &  \Delta_{n+k-1} - \gamma_{n+k-1}  Q \Delta_{n+k-1}
   - \sqrt{\gamma_{n+k}}\delta M_{n+k} + \gamma_{n+k-1} \bar R_{n+k-1}
 \end{align*}
 where
 \begin{align*}
   \bar R_{m} = - y(X_{m}-x^{\star}) \Delta_{m}  
   & + \frac{1}{\sqrt{\gamma_{m+1}}}  p_{m+1} +  \gamma_{m}
  (a_{m} I + b_{m} (A +y(X_{m}-x^{\star}))) \Delta_{m}, 
\end{align*}
We can actually notice that the previous equation pretty much looks like a
discrete time ODE. Based on this remark, it is natural to multiply the previous
equation by $\expp{s_{n,k} Q}$ to find
\begin{align*}
   \expp{s_{n,k} Q}\Delta_{n+k} - ( \expp{s_{n,k} Q} - 
   \expp{s_{n,k} Q}\gamma_{n+k-1}  Q ) \Delta_{n+k-1} & = 
   - \expp{s_{n,k} Q} \sqrt{\gamma_{n+k}}\delta M_{n+k} + 
   \gamma_{n+k-1} \expp{s_{n,k} Q}\bar R_{n+k-1}
 \end{align*}
Note that $\expp{s_{n,k} Q} - \expp{s_{n,k} Q}\gamma_{n+k-1}  Q =
 \expp{s_{n,k-1}Q}( 1 + \cO (\gamma_{n+k-1}^2))$. Hence, we come up with the
following equation
\begin{align*}
   \expp{s_{n,k} Q}\Delta_{n+k} - \expp{s_{n,k-1} Q}  
   \Delta_{n+k-1} & = 
   - \expp{s_{n,k} Q} \sqrt{\gamma_{n+k}}\delta M_{n+k} + 
   \gamma_{n+k-1} \expp{s_{n,k} Q} R_{n+k-1}
 \end{align*}
where 
 \begin{align}
   \label{eq:reste}
   R_{m} = - y(X_{m}-x^{\star}) \Delta_{m}  
   & + \frac{1}{\sqrt{\gamma_{m+1}}}  p_{m+1} +  \gamma_{m}
  (a_{m} I + b_{m} (A +y(X_{m}-x^{\star})) + \cO(1)) \Delta_{m}, 
\end{align}
When summing the previous equalities for $k = 1,\dots, t-1$ for any integer
$t>0$, we get
\begin{equation*}
  \Delta_{n+t}  = \expp{-s_{n,t} Q} \Delta_n - \sum_{k=0}^{t-1}
  \expp{(s_{n,k}-s_{n,t}) Q} \sqrt{\gamma_{n+k+1}} \delta  M_{n+k+1}
  - \sum_{k=0}^{t-1} \expp{(s_{n,k}-s_{n,t}) Q} \gamma_{n+k} R_{n+k},
\end{equation*}
Let us a have a closer look at the different terms of Equation~\eqref{eq:reste}
\begin{itemize}
  \item $\lim_m y(X_m - x^\s) \Delta_m \ind{|X_m - x^\s| > \eta} = 0$ a.s.
    thanks to the a.s. convergence of $(X_m)_m$ and using Lemma~\ref{lem:tight},
    the sequence $(y(X_m - x^\s) \Delta_m \ind{|X_m - x^\s| \le \eta})_m$ is
    uniformly integrable and tends to zero in probability because $\lim_m y(X_m -
    x^\s) = 0$ a.s. 
  \item $p_m$ is almost surely equal to $0$ for $m$ large enough thanks to
    Remark~\ref{rem_hypo}, so $\frac{1}{\sqrt{\gamma_m}} p_m = 0$ a.s. for $m$
    large enough.
  \item $\gamma_{m} (a_{m} I + b_{m} (A +y(X_{m}-x^{\star})) + \cO(1))
    \Delta_{m} \ind{|X_m - x^\s| > \eta} \longrightarrow 0$ almost surely
    because for $m$ large enough the indicator equals $0$. The sequence
    $\gamma_{m} (a_{m} I + b_{m} (A +y(X_{m}-x^{\star})) + \cO(1)) \Delta_{m}
    \ind{|X_m - x^\s| \le \eta}$ is uniformly integrable by
    Lemma~\ref{lem:tight} and tends to zero in probability because $\gamma_m
    \longrightarrow 0$.
\end{itemize}
Hence, $R_m$ can be split in two terms : one tending to zero almost surely
and an other one which is uniformly integrable and tends to zero in probability.
Then, we can apply Propositions~\ref{prop:integral-convergence-ps}
and~\ref{prop:integral-convergence} to prove the convergence in probability of
$(\sum_{k=0}^{t-1} \expp{(s_{n,k}-s_{n,t}) Q} \gamma_{n+k} R_{n+k})_t$. This last
point ends the proof of Lemma~\ref{lem:delta-n-expr}.

\subsubsection{Proof of Lemma \ref{lem:int-sto-conv}}

To prove Lemma \ref{lem:int-sto-conv}, we need a result on the rate of
convergence of martingale arrays. First, note that for $\inv{\sqrt{\gamma_{n}}}
\delta  M_{n} \ind{\abs{X_{n-1} -x^\s}}$ tends to $0$ a.s. when $n$ goes to
infinity because $\ind{\abs{X_{n-1} -x^\s}} = 0$ for $n$ large enough. Then, it
ensues from Proposition~\ref{prop:integral-convergence-ps} that $\sum_{k=0}^t
\expp{Q(s_{n,k}-s_{n,t})} \sqrt{\gamma_{n+k}} \delta  M_{n+k}
\ind{\abs{X_{n+k-1} -x^\s} > \eta}$ converges to zero in probability when $t$
goes to infinity. Henceforth, it is sufficient to prove a localized version of
Lemma~\ref{lem:int-sto-conv} by considering $\sum_{k=0}^t
\expp{Q(s_{n,k}-s_{n,t})} \sqrt{\gamma_{n+k}} \delta  M_{n+k}
\ind{\abs{X_{n+k-1} -x^\s} \le \eta}$.  \\

We will use the following Central Limit Theorem for martingale arrays adapted
from \cite[Theorem 2.1.9]{duflo97:_random_iterat_model}.
\begin{thm}
  \label{thm:tcl-array} Suppose that $\{(\cf_l^{t})_{0 \leq l \leq t}; t >
  0\}$ is a family of filtrations and $\{(N_l^{t})_{0 \leq l \leq t}; t > 0\}$
  a square integrable martingale array with respect to the previous
  filtration. Assume that : 
  \begin{hypo}
    \label{hyp:hook}there exists a symmetric positive definite matrix $\Gamma$ such
    that $\langle N \rangle^{t}_t \xrightarrow[t \rightarrow \infty]{\P} \Gamma$.
  \end{hypo}
  \begin{hypo}
    \label{hyp:lindeberg}There exists $\rho>0$ such that
    \begin{equation*}
      \sum_{l=1}^t \E\left(\abs{N^{t}_l - N^{t}_{l-1}}^{2+\rho} \left|
          \cf^{t}_{l-1}\right.\right) \xrightarrow[t \rightarrow \infty]{\P} 0.
    \end{equation*}
  \end{hypo}
  Then,
  \begin{equation*}
    N^{t}_t \xrightarrow[t \rightarrow \infty]{\cl} \cn(0,\Gamma).
  \end{equation*}
\end{thm}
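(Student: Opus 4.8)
The plan is to reduce the multivariate statement to the classical one-dimensional central limit theorem for martingale difference arrays and then either quote or reprove the scalar version. By the Cram\'er--Wold device it suffices to show that, for every fixed $\theta \in \R^d$, the scalar array $Y^t_l = \theta' N^t_l$ satisfies $Y^t_t \xrightarrow[t \to \infty]{\cl} \cn(0, \theta' \Gamma \theta)$. For each $t$, $(Y^t_l)_{0 \le l \le t}$ is again a square integrable martingale array for the same family of filtrations $(\cf^t_l)_l$, its increments are $\theta'(N^t_l - N^t_{l-1})$, and its predictable bracket is $\langle Y \rangle^t_l = \theta' \langle N \rangle^t_l\, \theta$; hence Hypothesis~\hypref{hook} gives $\langle Y \rangle^t_t \xrightarrow{\P} \theta' \Gamma \theta$.

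Next I would deduce the conditional Lindeberg condition for the scalar array from Hypothesis~\hypref{lindeberg}. Since $|\theta' x|^{2+\rho} \le |\theta|^{2+\rho}\,|x|^{2+\rho}$, the Lyapunov sums for $Y$ are dominated by $|\theta|^{2+\rho}$ times those for $N$, so they still tend to $0$ in probability; and for every $\varepsilon>0$ one has $|y|^2 \ind{|y| > \varepsilon} \le \varepsilon^{-\rho} |y|^{2+\rho}$, whence
\begin{equation*}
  \sum_{l=1}^t \E\left( |Y^t_l - Y^t_{l-1}|^2 \ind{|Y^t_l - Y^t_{l-1}| > \varepsilon} \,\big|\, \cf^t_{l-1} \right) \le \varepsilon^{-\rho} \sum_{l=1}^t \E\left( |Y^t_l - Y^t_{l-1}|^{2+\rho} \,\big|\, \cf^t_{l-1} \right) \xrightarrow[t \to \infty]{\P} 0 .
\end{equation*}
In particular $\max_{1 \le l \le t} |Y^t_l - Y^t_{l-1}| \xrightarrow{\P} 0$, which is the usual negligibility of the maximal increment.

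With the convergence of the predictable bracket and the conditional Lindeberg condition in hand, the conclusion $Y^t_t \xrightarrow{\cl} \cn(0, \theta' \Gamma \theta)$ is exactly the classical CLT for scalar martingale difference arrays: one may invoke \cite[Theorem 2.1.9]{duflo97:_random_iterat_model} directly, or give the short self-contained McLeish-type argument via characteristic functions. In the latter, setting $\phi^t = \prod_{l=1}^t \bigl(1 + i u (Y^t_l - Y^t_{l-1})\bigr)$, one shows by a telescoping martingale computation that $\E\bigl(e^{i u Y^t_t}/\phi^t\bigr) \to 1$, while the convergence of the bracket together with the Lindeberg control and the bound on the maximal increment force $\phi^t \to \exp\bigl(-\tfrac12 u^2 \theta' \Gamma \theta\bigr)$ in the appropriate sense and keep $\phi^t$ bounded away from $0$; combining the two yields $\E(e^{iuY^t_t}) \to \exp(-\tfrac12 u^2 \theta'\Gamma\theta)$. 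A final application of Cram\'er--Wold gives $N^t_t \xrightarrow{\cl} \cn(0, \Gamma)$, the positive definiteness of $\Gamma$ ensuring a genuine non-degenerate Gaussian limit.

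The domination inequalities are routine; the only points requiring care are the reduction step — keeping the triangular (array) structure and the $t$-dependent filtration straight, so that for each fixed $t$ the sums run over $l = 1,\dots,t$ relative to $(\cf^t_l)_l$ rather than a single fixed filtration — and, if one prefers the self-contained characteristic-function route over a citation, the uniform control of the error terms in the expansion of the complex logarithm along the array. This is the step I expect to be the main obstacle.
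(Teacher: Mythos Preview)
Your proof sketch is correct and follows the standard route for martingale-array CLTs: Cram\'er--Wold reduction to the scalar case, domination of the conditional Lindeberg sums by the Lyapunov sums via $|y|^2 \ind{|y|>\varepsilon} \le \varepsilon^{-\rho}|y|^{2+\rho}$, and then either a direct citation or a McLeish characteristic-function argument for the scalar array.

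There is, however, nothing to compare against: the paper does not prove this theorem. It is stated explicitly as a tool ``adapted from \cite[Theorem 2.1.9]{duflo97:_random_iterat_model}'' and is then used as a black box in the proof of Lemma~\ref{lem:int-sto-conv}. Your proposal therefore goes strictly further than the paper. The only ``adaptation'' the paper performs relative to Duflo's formulation is to state the hypothesis as the Lyapunov-type condition~\hypref{lindeberg} rather than the usual conditional Lindeberg condition; that is precisely the implication you spell out in your second paragraph, so your reduction is fully aligned with how the paper intends the result to be read. If you choose the self-contained McLeish route, the care you flag about the array structure and the uniform control of the logarithm expansion is indeed the only delicate point, but it is standard and not something the paper addresses at all.
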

Using this theorem, we can now prove Lemma~\ref{lem:int-sto-conv}.
\begin{proof}[Proof of Lemma~\ref{lem:int-sto-conv}]
  For the sake of clearness, we will do the proof considering that $Q$ is a
  non-negative real constant instead of a positive definite matrix.  Let us
  define $N^t_l$ for all $0 \leq l \leq t$ and $t>0$
  \begin{equation*}
    N^t_l = \sum_{k=1}^{l}
    \expp{(s_{n,k}-s_{n,t}) Q} \sqrt{\gamma_{n+k}} \delta  M_{n+k}
    \ind{\abs{X_{n+k-1} -x^\s} \le \eta}.
  \end{equation*}
  $(N^t_l)_{0\leq l \leq p}$ is obviously a martingale  with respect to $(\cf_{n+l})_l$.
  Let us compute its angle bracket
  \begin{eqnarray}
    \label{eq:64}
    \langle N \rangle_t^t & = & \sum_{k=1}^{t} \expp{2 (s_{n,k}-s_{n,t}) Q}
    \gamma_{n+k} \E\left(\delta M_{n+k}^2 \ind{\abs{X_{n+k-1} -x^\s} \le \eta}|
      \cf_{n+k-1}\right).  
  \end{eqnarray}
  Thanks to Hypotheses \hypref{chen-mart-cv}, the conditional expectation in
  (\ref{eq:64}) is uniformly integrable and converges in probability to $\Sigma$
  when $k$ goes to infinity.  Applying
  Proposition~\ref{prop:integral-convergence}  proves the convergence in
  probability of $\langle N\rangle_t^t$ to $\lim_{t \rightarrow
  \infty}\sum_{k=1}^{t} \expp{2 (s_{n,k}-s_{n,t}) Q} \gamma_{n+k} \Sigma =
  \sum_{k=1}^{\infty} \expp{-2 s_{n,k} Q} \gamma_{n+k}  \Sigma$.  Let $\rho$ be
  the real number defined in Theorem~\ref{thm:tcl-chen}.
  \begin{equation}
    \label{eq:27}
    \sum_{l=1}^t \E\left(\abs{N^{t}_l - N^{t}_{l-1}}^{2+\rho} \right)  =
    \sum_{k=1}^{t} \expp{(2+\rho)(s_{n,k}-s_{n,t}) Q} 
    \gamma_{n+k}^{1+\frac{\rho}{2}} \E\left(\abs{\delta
        M_{n+k}}^{2+\rho} \ind{\abs{X_{n+k-1} -x^\s} \le \eta}\right).
  \end{equation}
  $ \gamma_{n+k}^{\frac{\rho}{2}}$ converges to $0$ when $k$ goes to
  infinity and the sequence of expectations is bounded using Hypothesis
  \hypref{chen-mart-cv}, so $\gamma_{n+k}^{\frac{\rho}{2}}
  \E\left(\abs{\delta M_{n+k}}^{2+\rho} \ind{\abs{X_{n+k-1} -x^\s} \le
      \eta}\right)$ tends to zero when $k$ goes to infinity.
  Proposition~\ref{prop:integral-convergence-ps} proves that the l.h.s. of
  Equation~\eqref{eq:27} tends to $0$ when $t$ goes to infinity.
  Hence, $ \sum_{l=1}^t \E\left(\abs{N^{t}_l - N^{p}_{l-1}}^{2+\rho} \left|
      \cf^{t}_{l-1}\right.\right)$ tends to zero in $\L^1$, and consequently
  in probability. Then, the Hypotheses of Theorem~\ref{thm:tcl-array} are
  satisfied.\\
  Finally, we have proved that
  \begin{equation*}
   \sum_{k=0}^t
   \expp{Q(s_{n,k}-s_{n,t})} \sqrt{\gamma_{n+k}} \delta  M_{n+k} \xrightarrow[t
   \rightarrow \infty]{law} \cn\left(0, \sum_{k=1}^{\infty} \expp{-2 s_{n,k} Q}
   \gamma_{n+k}  \Sigma\right).
  \end{equation*}
\end{proof}

\section{Conclusion}

In this work, we have proved  a Central Limit Theorem with rate
$\sqrt{\gamma_n}$ for randomly truncated stochastic algorithms under local
assumptions. We have also tried to clarify the proof of the convergence rate of
randomly truncated stochastic algorithms under assumptions which can be easily
verified in practice. The improvement brought by this new set of assumptions is
that all they should only be checked in a neighbourhood of the target value
$x^\s$, which means that in the case where $u(x) = \E(U(x,Z))$ the assumptions
can be reformulated in terms of some local regularity properties of $U$.

\appendix

\section{Some elementary results}

Here are two results used in the proofs of Theorems~\ref{thm:tcl-chen}
and~\ref{thm:tcl2-chen}. 

\begin{prop}
  \label{prop:integral-convergence-ps} Let $(Y_n)_n$ be a sequence of random
  vectors of $\R^d$ converging almost surely to a non random vector $x \in
  \R^d$. For any fixed integer $n>0$ and positive definite matrix $Q \in \R^{d
  \times d}$, we define, for all integers $t \geq 0$, $Z_t = \sum_{k=0}^t
  \expp{Q(s_{n,k}-s_{n,t})} \gamma_{n+k} Y_{n+k}$. Then, $\lim_t Z_t =
  \int_0^\infty \expp{-Q u} du \; x$ almost surely.
\end{prop}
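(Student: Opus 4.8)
The plan is to prove Proposition~\ref{prop:integral-convergence-ps} by recognizing $Z_t$ as a Riemann-sum approximation (on the grid $(s_{n,k})_k$) of the integral $\int_0^{s_{n,t}} \expp{-Q(s_{n,t}-s)} Y(s)\, ds$, and then splitting the error into a deterministic convergence term plus a vanishing fluctuation term coming from $Y_{n+k}\to x$. First I would write $Z_t = \sum_{k=0}^t \expp{Q(s_{n,k}-s_{n,t})}\gamma_{n+k}\, x + \sum_{k=0}^t \expp{Q(s_{n,k}-s_{n,t})}\gamma_{n+k}(Y_{n+k}-x)$. For the first sum, since $\gamma_{n+k}=s_{n,k}-s_{n,k-1}$ (up to reindexing), it is exactly a Riemann sum for $\int_0^{s_{n,t}}\expp{-Q u}\,du\; x$; because the integrand $u\mapsto \expp{-Qu}$ is continuous and integrable on $[0,\infty)$ (as $Q$ is positive definite), and the mesh $\gamma_{n+k}\to 0$ while $s_{n,t}\to\infty$, this converges to $\int_0^\infty \expp{-Qu}\,du\; x$ as $t\to\infty$. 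A clean way is to dominate: $\big|\sum_{k=0}^t \expp{Q(s_{n,k}-s_{n,t})}\gamma_{n+k} - \int_0^{s_{n,t}}\expp{-Qu}\,du\big|$ can be bounded using the monotonicity/Lipschitz behaviour of $\expp{-Qu}$ on each subinterval, giving a bound like $\sum_k \gamma_{n+k}\,\sup_{[s_{n,k-1},s_{n,k}]}\big|\expp{-Q\cdot}\big|\cdot(\text{oscillation})$, which goes to $0$; alternatively, and perhaps more simply, one notes $\sum_{k=0}^t \expp{-Q s_{n,t-k}}\gamma_{n+t-k}$ and uses dominated convergence after passing to the explicit geometric-type bound $\|\expp{-Qu}\|\le C\expp{-\lambda u}$ with $\lambda>0$ the smallest eigenvalue of $Q$.

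For the second sum, fix $\varepsilon>0$; by almost sure convergence $Y_{n+k}\to x$, there is (random) $K$ such that $|Y_{n+k}-x|<\varepsilon$ for $k\ge K$. Split the sum at $K$: the tail $\sum_{k=K}^{t}\expp{Q(s_{n,k}-s_{n,t})}\gamma_{n+k}(Y_{n+k}-x)$ is bounded in norm by $\varepsilon \sum_{k\ge 0}\|\expp{Q(s_{n,k}-s_{n,t})}\|\gamma_{n+k}\le \varepsilon\cdot C\sum_k \expp{-\lambda(s_{n,t}-s_{n,k})}\gamma_{n+k}$, and this last sum is bounded uniformly in $t$ by (a constant times) $\int_0^\infty \expp{-\lambda u}\,du = 1/\lambda$, so the tail contribution is $O(\varepsilon)$. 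The head $\sum_{k=0}^{K-1}\expp{Q(s_{n,k}-s_{n,t})}\gamma_{n+k}(Y_{n+k}-x)$ involves only finitely many (though random) terms, each multiplied by $\expp{Q(s_{n,k}-s_{n,t})}$ with $s_{n,k}-s_{n,t}\to-\infty$ as $t\to\infty$ for fixed $k$; hence $\|\expp{Q(s_{n,k}-s_{n,t})}\|\le C\expp{-\lambda(s_{n,t}-s_{n,k})}\to 0$, so for fixed $\omega$ the head tends to $0$ as $t\to\infty$. Combining, $\limsup_t |Z_t - \int_0^\infty\expp{-Qu}\,du\;x| \le C'\varepsilon$ almost surely for every $\varepsilon>0$, giving the claim.

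The main obstacle — really the only delicate point — is handling the matrix exponential uniformly: one must be careful that $\|\expp{-Qu}\|\le C\expp{-\lambda u}$ with $\lambda>0$ the smallest eigenvalue of $Q$, which holds because $Q$ is symmetric positive definite (so $\|\expp{-Qu}\| = \expp{-\lambda u}$ exactly in the spectral norm). With that bound in hand, all the sums $\sum_k \expp{-\lambda(s_{n,t}-s_{n,k})}\gamma_{n+k}$ are controlled uniformly in $t$ by comparison with $\int_0^{s_{n,t}}\expp{-\lambda(s_{n,t}-s)}\,ds \le 1/\lambda$ (using that $\gamma$ is decreasing, so $\gamma_{n+k}$ on $[s_{n,k-1},s_{n,k}]$ is comparable to the interval length), and the Riemann-sum convergence for the deterministic part follows from standard arguments for improper integrals of continuous monotone-decaying integrands. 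A secondary subtlety is that the "rank $K$" in the second step is random, but since we only need almost sure convergence and each head sum has finitely many terms, this causes no difficulty: we work $\omega$ by $\omega$.
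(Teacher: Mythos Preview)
Your proposal is correct and follows essentially the same route as the paper: both split $Z_t$ into a deterministic Riemann-sum part $\sum_k \expp{Q(s_{n,k}-s_{n,t})}\gamma_{n+k}\,x$ converging to $\int_0^\infty \expp{-Qu}\,du\;x$, and a fluctuation part $\sum_k \expp{Q(s_{n,k}-s_{n,t})}\gamma_{n+k}(Y_{n+k}-x)$ tending to zero. The only cosmetic difference is in handling the fluctuation term: the paper rewrites the sum as an integral via a step function $\tau_n(u)$ and invokes the a.s.\ convergence $|Y_{n+\tau_n(u)}-x|\to 0$, whereas you do the equivalent head/tail $\varepsilon$-split directly; both rely on the same uniform bound $\sum_k \expp{-\lambda(s_{n,t}-s_{n,k})}\gamma_{n+k}\le C/\lambda$ coming from the spectral estimate $\norm{\expp{-Qu}}\le \expp{-\lambda u}$.
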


\begin{proof} 
  It is clear that $\lim_{t \rightarrow \infty} \int_0^{s_{n,t}} \expp{-Q u} du
  \; x = \int_0^\infty \expp{-Q u} du \; x$. Hence, it is sufficient to consider  
  \begin{align}
    \abs{Z_t - \int_0^{s_{n,t}} \expp{-Q u} du \; x} \leq  & \sum_{k=0}^t
    \gamma_{n+k} \norm{\expp{Q(s_{n,k}-s_{n,t})}} |Y_{n+k} - x| \nonumber\\
    & \quad + \norm{\sum_{k=0}^t \gamma_{n+k} \expp{Q(s_{n,k}-s_{n,t})} -   
      \int_0^{s_{n,t}} \expp{-Q u} du} |x|. 
    \label{Zdecomp}
  \end{align}

  Let $\underbar q>0$ (resp. $\bar q>0$) be the smallest (resp. greatest)
  eigenvalue of $Q$.\\

  {\bf Step 1 : }~We will prove that the first term in Equation~\eqref{Zdecomp} tends
  to $0$ almost surely.
  \begin{align}
    \label{Z_term_1}
    \sum_{k=0}^t
    \gamma_{n+k} \norm{\expp{Q(s_{n,k}-s_{n,t})}} |Y_{n+k} - x| & \leq 
    \sum_{k=0}^t
    \int_{s_{n,k-1}}^{s_{n,k}} \expp{\underbar q(s_{n,k}-s_{n,t})} |Y_{n+k} - x| \;
    du \nonumber \\
    & \leq \int_0^{s_{n,t}} \expp{\underbar q(u-s_{n,t})} \expp{\underbar q
    \gamma_{n+\tau_n(u)}} |Y_{n+\tau_n(u)} - x| \; du
  \end{align}
  where for any real number $u>0$ $t_n(u)$ is the largest integer $k$ such that
  $s_{n,k-1} \le u < s_{n,k}$. Note that $\lim_{u \rightarrow +\infty} t_n(u) =
  +\infty$. $\lim_{u \rightarrow +\infty} \expp{\underbar q
  \gamma_{n+\tau_n(u)}} |Y_{n+\tau_n(u)} - x| = 0$ a.s., hence it is obvious
  that the term on the r.h.s of Equation~\eqref{Z_term_1} tend to $0$ almost surely.

  {\bf Step 2 : }~We will now prove that the second term in Equation~\eqref{Zdecomp}
  tends to $0$. \\
  We use the convention $s_{n,-1} = 0$ and recall that $s_{n,k} = s_{n,k-1} +
  \gamma_{n+k}$. Note that $\int_0^{s_{n,t}} \expp{-Q u} du =  \int_0^{s_{n,t}}
  \expp{Q (u - s_{n,t})} du$, hence the following inequality holds 
  \begin{align*}
    \norm{\sum_{k=0}^t \gamma_{n+k} \expp{Q(s_{n,k}-s_{n,t})} -   
      \int_0^{s_{n,t}} \expp{-Q u} du} & \leq  
    \sum_{k=0}^t \int_{s_{n,k-1}}^{s_{n,k}} \norm{\expp{Q(s_{n,k}-s_{n,t})}
      -\expp{Q (u-s_{n,t})}} du  \\
    & \leq \sum_{k=0}^t \int_{s_{n,k-1}}^{s_{n,k}} \norm{\expp{Q (u-s_{n,t})}}
    \norm{\expp{Q(s_{n,k}-u)} - I} du.  \\
    & \leq \sum_{k=0}^t \int_{s_{n,k-1}}^{s_{n,k}} \expp{\underbar q (u-s_{n,t})}
    (\expp{\bar q \gamma_{n+k}} - 1) du.
  \end{align*}
  Let $\varepsilon >0$, there exits $T_1 >0$ such that for all $t \ge T_1$, 
  $(\expp{q \gamma_{n+t}} - 1) \le \varepsilon$, hence for all $t > T_1$,
  \begin{align*}
    \sum_{k=0}^t \int_{s_{n,k-1}}^{s_{n,k}} \expp{\underbar q (u-s_{n,t})}
    (\expp{\bar q \gamma_{n+k}} - 1) du  & \leq
    \sum_{k=0}^{T_1} \int_{s_{n,k-1}}^{s_{n,k}} \expp{\underbar q (u-s_{n,t})}
    (\expp{\bar q} - 1) du +  
    \varepsilon \sum_{k=T_1+1}^t \int_{s_{n,k-1}}^{s_{n,k}} \expp{\underbar q
      (u-s_{n,t})} du, \\
    & \leq \int_{0}^{s_{n,T_1}} \expp{\underbar q (u-s_{n,t})} (\expp{\bar q} -
    1) du +  \varepsilon \int_{s_{n,T_1}}^{s_{n,t}} \expp{\underbar q
      (u-s_{n,t})} du, \\
    & \leq  (\expp{\underbar q ({s_{n,T_1}}-s_{n,t})} -  \expp{-\underbar q
      s_{n,t}}) \frac{\expp{\bar q} - 1}{\underbar q} +  \varepsilon
    \frac{1}{\underbar q}.\\
  \end{align*}
  There exists $T_2 > T_1$ such that for all $t>T_2$, $(\expp{\underbar q
    ({s_{n,T_1}}-s_{n,t})} -  \expp{-\underbar q s_{n,t}}) (\expp{\bar q} - 1) \le
  \varepsilon$, hence for all $t> T_2$,
  \begin{align*}
    \sum_{k=0}^t \int_{s_{n,k-1}}^{s_{n,k}} \expp{\underbar q (u-s_{n,t})}
    (\expp{\bar q \gamma_{n+k}} - 1) du \le \frac{2 \varepsilon}{\underbar q}.
  \end{align*}
  This ends to prove that the second term in Equation~\eqref{Zdecomp} tends to
  $0$ when $t$ goes to infinity.
  \end{proof}

\begin{prop}
  \label{prop:integral-convergence} 
  The conclusion of Proposition~\ref{prop:integral-convergence-ps} still holds
  if we assume that the sequence $(Y_n)_n$ is uniformly integrable and if it
  converges in probability to a non random vector $x \in \R^n$.
\end{prop}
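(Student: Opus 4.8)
The strategy is to reduce the convergence-in-probability case to the almost-sure case already proved, using the classical subsequence characterization of convergence in probability. Recall that a sequence converges in probability to a limit if and only if every subsequence has a further subsequence that converges almost surely to that limit. Concretely, I would argue as follows. Fix the integer $n>0$ and the positive definite matrix $Q$, and set $Z_t = \sum_{k=0}^t \expp{Q(s_{n,k}-s_{n,t})} \gamma_{n+k} Y_{n+k}$ and $Z_\infty = \int_0^\infty \expp{-Qu}\,du\; x$. The goal is to show $Z_t \xrightarrow{\P} Z_\infty$.

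Inspecting the proof of Proposition~\ref{prop:integral-convergence-ps}, the only place where almost sure convergence of $(Y_n)_n$ was used is Step~1, to control $\sum_{k=0}^t \gamma_{n+k} \norm{\expp{Q(s_{n,k}-s_{n,t})}} |Y_{n+k}-x|$; Step~2 does not involve $(Y_n)_n$ at all and goes through verbatim. So it suffices to show that this first term tends to $0$ in probability under the new hypotheses. Following the bound \eqref{Z_term_1}, it is enough to prove that
\begin{equation*}
  W_t := \int_0^{s_{n,t}} \expp{\underbar q(u-s_{n,t})} \expp{\underbar q \gamma_{n+\tau_n(u)}} |Y_{n+\tau_n(u)} - x|\; du \xrightarrow[t\to\infty]{\P} 0,
\end{equation*}
where $\underbar q>0$ is the smallest eigenvalue of $Q$ and $\tau_n$ is as defined there. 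Taking expectations and writing $\varphi_m = \E(|Y_{n+m}-x|)$, one gets $\E(W_t) \le C \int_0^{s_{n,t}} \expp{\underbar q(u-s_{n,t})}\, \varphi_{\tau_n(u)}\, du$ for a constant $C$ absorbing $\expp{\underbar q \gamma_{n+1}}$. Uniform integrability of $(Y_n)_n$ together with convergence in probability to $x$ gives $Y_n \to x$ in $\L^1$, hence $\varphi_m \to 0$. A standard dominated-type argument for the kernel $\expp{\underbar q(u-s_{n,t})}$ (split the integral at a fixed large threshold beyond which $\varphi_{\tau_n(u)}$ is small, and bound the tail of the kernel near $u=0$ using $\sup_m \varphi_m < \infty$, which holds since $\L^1$-convergent sequences are bounded in $\L^1$) shows $\E(W_t) \to 0$, and therefore $W_t \to 0$ in probability. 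Combining this with the (deterministic) Step~2 convergence yields $Z_t \xrightarrow{\P} Z_\infty$.

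The main obstacle — though a mild one — is verifying that uniform integrability plus convergence in probability upgrades to $\L^1$-convergence of $(Y_n)_n$, and then pushing this through the weighted integral $W_t$ uniformly in $t$; this is exactly the point where Step~1 of the previous proof used almost sure convergence and a pointwise limit, and here we replace the pointwise argument by the $\L^1$-bound on $\E(W_t)$. Alternatively, and perhaps more cleanly, one can bypass the $\L^1$ estimate entirely: given any subsequence $(t_j)$, extract a further subsequence along which $Y_{n+\tau_n(u)} \to x$ almost surely for (almost) every $u$ — which is possible because convergence in probability of $(Y_m)_m$ passes to a.s. convergence along a subsequence — and then invoke Step~1 of Proposition~\ref{prop:integral-convergence-ps} verbatim on that subsequence to conclude $Z_{t_j} \to Z_\infty$ a.s. along it. Since every subsequence has a further a.s.-convergent subsequence, $Z_t \xrightarrow{\P} Z_\infty$, which is the claim.
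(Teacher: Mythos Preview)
Your primary argument --- reusing the decomposition~\eqref{Zdecomp}, noting that Step~2 is deterministic and unchanged, and then bounding the first term in $\L^1$ via $\E(W_t) \le C \int_0^{s_{n,t}} \expp{\underbar q(u-s_{n,t})}\,\varphi_{\tau_n(u)}\,du$ with $\varphi_m = \E\abs{Y_{n+m}-x} \to 0$ by uniform integrability plus convergence in probability --- is exactly the paper's proof. Nothing to add there.

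One caution on your alternative subsequence route: it does not work as stated. The quantity $W_t$ (or $Z_t$) depends on \emph{all} of $Y_{n+1},\dots,Y_{n+t}$, not just on $Y_{n+t}$. Extracting a subsequence $(m_j)$ along which $Y_{m_j}\to x$ a.s.\ gives you no control over the remaining $Y_m$'s with $m\notin\{m_j\}$, and those still appear in $W_{t_j}$ with non-negligible weight. So you cannot simply ``invoke Step~1 of Proposition~\ref{prop:integral-convergence-ps} verbatim on that subsequence''. The $\L^1$ argument is the right one here; drop the subsequence alternative.
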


\begin{proof} 
  We recall the decomposition given by Equation~\eqref{Zdecomp}
  \begin{align*}
    \abs{Z_t - \int_0^{s_{n,t}} \expp{-Q u} du \; x} \leq  & \sum_{k=0}^t
    \gamma_{n+k} \norm{\expp{Q(s_{n,k}-s_{n,t})}} |Y_{n+k} - x| \\
    & \quad + \norm{\sum_{k=0}^t \gamma_{n+k} \expp{Q(s_{n,k}-s_{n,t})} -   
      \int_0^{s_{n,t}} \expp{-Q u} du} |x|. 
  \end{align*}
  The last term in the above equation has already been proved to tend to $0$ in
  the proof of Proposition~\ref{prop:integral-convergence-ps} Step 2. So, we
  only need to prove that $\lim_{u \rightarrow +\infty}\sum_{k=0}^t
  \gamma_{n+k} \norm{\expp{Q(s_{n,k}-s_{n,t})}} |Y_{n+k} - x| = 0$ in
  probability. 

  Let $\underbar q>0$ (resp. $\bar q>0$) be the smallest (resp. greatest)
  eigenvalue of $Q$.\\

  \begin{align*}
    \sum_{k=0}^t
    \gamma_{n+k} \norm{\expp{Q(s_{n,k}-s_{n,t})}} |Y_{n+k} - x| & \leq 
    \sum_{k=0}^t
    \int_{s_{n,k-1}}^{s_{n,k}} \expp{\underbar q(s_{n,k}-s_{n,t})} |Y_{n+k} - x| \; du
    \\
    & \leq \int_0^{s_{n,t}} \expp{\underbar q(u-s_{n,t})} \expp{\underbar q
    \gamma_{n+\tau_n(u)}}  |Y_{n+\tau_n(u)} - x| \; du
  \end{align*}
  where for any real number $u>0$, $t_n(u)$ is the largest integer $k$ such that
  $s_{n,k-1} \le u < s_{n,k}$. Let $\bar Y_k = \gamma_k |Y_k - x|$. The sequence
  $(\bar Y_k)_k$ tends to zero in probability, is uniformly integrable and
  positive.
  \begin{align}
    \label{Zdecomp_2}
    \E\left(\int_0^{s_{n,t}} \expp{\underbar q(u-s_{n,t})} \bar Y_{n+\tau_n(u)}
    \; du \right) & = \int_0^{s_{n,t}} \expp{\underbar q(u-s_{n,t})} \E(\bar
    Y_{n+\tau_n(u)}) du.
  \end{align}
  Since $(\bar Y_k)_k$ is uniformly integrable and converges to $0$ in
  probability, $\lim_{u \rightarrow +\infty}\E(\bar Y_{n+\tau_n(u)}) = 0$, hence
  the term on the r.h.s of Equation~\eqref{Zdecomp_2} tends to $0$ when $t$ goes
  to infinity. This proves that $\lim_{u \rightarrow +\infty}\sum_{k=0}^t
  \gamma_{n+k} \norm{\expp{Q(s_{n,k}-s_{n,t})}} |Y_{n+k} - x| = 0$ in $\L^1$ and
  in probability. 
  \end{proof}

\end{document}